\pgfplotsset{compat=1.18}
\newenvironment{itenv*}
  {\phantomsection\par\medskip\noindent\itshape}
  {\par\medskip}
\renewcommand{\k}{\Bbbk} %abstract field
\DeclareMathOperator{\rk}{rk} %rank
\DeclareMathOperator{\Hom}{Hom} %homomorphism
\DeclareMathOperator{\sheafhom}{\mathscr{H}\text{\kern -3pt {\calligra\large om}}\,} %sheaf hom
\DeclareMathOperator{\cA}{\mathcal{A}} %smooth forms
\DeclareMathOperator{\cH}{\mathcal{H}} %smooth forms
\DeclareMathOperator{\cD}{\mathcal{D}} %currents
\tikzset{->-/.style={decoration={ markings, mark=at position #1 with
{\arrow{>}}},postaction={decorate}}}
\tikzset{-<-/.style={decoration={ markings, mark=at position #1 with
{\arrow{<}}},postaction={decorate}}}
\def\Stab{\mathrm{Stab}}
\def\gl2{\widetilde{\mathrm{GL}}^+_{2}(\mathbb R)}
\def\cA{\mathcal A}
\def\cB{\mathcal B}
\def\cC{\mathcal C}
\def\cD{\mathcal D}
\def\cH{\mathcal H}
\def\cP{\mathcal P}
\def\cT{\mathcal T}
\def\cW{\mathcal W}
\def\k{\mathbf{k}}
\theoremstyle{plain}
\newtheorem{theorem}{Theorem}[section]
\newtheorem{corollary}[theorem]{Corollary}
\theoremstyle{definition}
\newtheorem{definition}[theorem]{Definition}
\newtheorem{remark}[theorem]{Remark}
\newtheorem{lemma}[theorem]{Lemma}
\numberwithin{equation}{section}
\numberwithin{equation}{section}
\newtheorem{question}[theorem]{Question}
\tikzset{->-/.style={decoration={ markings, mark=at position #1 with
{\arrow{>}}},postaction={decorate}}}
\tikzset{-<-/.style={decoration={ markings, mark=at position #1 with
{\arrow{<}}},postaction={decorate}}}
\newcommand{\keywords}[1]
{
  \small	
  \textbf{\textit{Keywords---}} #1
}
\title{A counterexample to the Jordan--H\"older property for polarizable semiorthogonal decompositions}
\author{Fabian Haiden and Dongjian Wu}
\newcommand{\Addresses}{{
  \setlength{\parindent}{0pt}
  \bigskip
  \footnotesize
    FH: \textsc{Centre for Quantum Mathematics, Department of Mathematics and Computer Science, University of Southern Denmark, Campusvej 55, 5230 Odense, Denmark} \par\nopagebreak
	\textit{E-mail:} \texttt{fab@sdu.dk}
    \medskip

    DW: \textsc{Department of Mathematical Sciences, Tsinghua University, 100084 Beijing, China} \par\nopagebreak
	\textit{E-mail:} \texttt{wdj20@mails.tsinghua.edu.cn}
}}
\date{}
\begin{document}

%=========================================================

\maketitle

\begin{abstract}
We show that the Jordan--H\"older property fails for polarizable semiorthogonal decompositions --- those where every factor admits a Bridgeland stability condition. Counterexamples exist among Fukaya categories of surfaces and bounded derived categories of smooth projective varieties. Furthermore, we give an example of a smooth and proper pre-triangulated dg category with positive rank Grothendieck group which does not admit a stability condition. 
\end{abstract}

\keywords{semiorthogonal decomposition, Bridgeland stability condition, Fukaya category, derived category}

\tableofcontents
%\setcounter{section}{1}

%\addtocontents{toc}{\setcounter{tocdepth}{1}}

\setlength\parindent{0pt}
\setlength{\parskip}{5pt}

%=========================================================
\section{Introduction}

\subsection*{The failure of the Jordan--H\"older property for SODs}

In the study of triangulated categories, such as derived categories of coherent sheaves on projective varieties, a central notion is that of a \textit{semiorthogonal decomposition} (SOD) $\mathcal C=\langle \mathcal C_1,\ldots,\mathcal C_n\rangle$. 
By definition, each $\mathcal C_i\subset\mathcal C$ is a full triangulated subcategory, $\mathrm{Hom}(\mathcal C_i,\mathcal C_j)=0$ for $j<i$, and $\mathcal C$ is the extension closure of the $\mathcal C_i$'s.

SODs are reminiscent of composition series, and an initial optimistic guess may have been that an analog of the Jordan--H\"older theorem holds for them.
A first counterexample is attributed to Bondal~\cite{kuznetsov_5ecm,kuznetsov2013simplecounterexamplejordanholderproperty}, where the triangulated category is the bounded derived category of finite-dimensional representations of the following quiver with relations:
\begin{equation}
\label{Q}
Q=\left(
\begin{tikzcd}
\mathop{\bullet}\limits_{1} \arrow[r, "b_1"', bend right] \arrow[r, "a_1", bend left] & \mathop{\bullet}\limits_{2} \arrow[r, "a_2", bend left] \arrow[r, "b_2"', bend right] & \mathop{\bullet}\limits_{3}
\end{tikzcd}\Bigg{\vert}\quad a_2b_1=b_2a_1=0
\right).
\end{equation}
Since the quiver is acyclic, the three projective representations form a full exceptional collection of length 3 in $D^b(Q)$.
On the other hand, the object $P$ given by the representation
\begin{equation}
\label{Q'}
P:
\begin{tikzcd}
\mathbf k \arrow[r, "1", shift left=2] \arrow[r, "0"', shift right] & \mathbf k \arrow[r, "1", shift left=2] \arrow[r, "0"', shift right] & \mathbf k
\end{tikzcd}
\end{equation}
is exceptional, but there are no exceptional objects left- or right-orthogonal to $P$, i.e. $P$ is not part of an exceptional collection of length $> 1$. 

In Bondal's example, the triangulated category is not of the form $D^b(X)$ for $X$ some smooth projective variety.
It was a hope of Kuznetsov, with applications to birational geometry in mind, that the Jordan--H\"older property could at least be true for categories of this form~\cite{kuznetsov_5ecm}.
A first counterexample was found by B\"ohning--Graf von Bothmer--Sosna \cite{BOHNING2014479}, where $X$ is the classical Godeaux surface.
Another, simpler counterexample was pointed out shortly thereafter by Kuznetsov himself, based on realizing Bondal's example as an admissible subcategory of the derived category of a blowup of $\mathbb P^3$~\cite{kuznetsov2013simplecounterexamplejordanholderproperty}.

Among geometric derived categories, very few have had the Jordan--H\"older property confirmed, including (trivially) connected Calabi--Yau categories, derived categories of curves of positive genus \cite{OKAWA20112869}, $\cD^b(\mathbb P^1)$, quotient stacks $\cD^b(\mathbb P^1/\Gamma)$ \cite{kirillov2006mckaycorrespondenceequivariantsheaves}, and $\cD^b(\mathbb P^2)$~\cite{pirozhkov}.

\subsection*{Stability conditions and polarizability}

The notion of a stability conditions on a triangulated category was introduced by Bridgeland~\cite{MR2373143}, drawing inspiration from geometric invariant theory and string theory. By far not all triangulated categories admit a stability condition in this sense.
Conjecturally, all triangulated categories of the form $D^b(X)$, where $X$ is a smooth projective variety, admit a stability condition. This is a central open question in the field.

In a very intriguing paper~\cite{halpernleistner2024noncommutativeminimalmodelprogram}, Halpern-Leistner proposes a conjectural picture involving birational geometry, semiorthogonal decompositions, and stability conditions.
As a main principle, he suggests restricting to those SODs $\mathcal C=\langle \mathcal C_1,\ldots,\mathcal C_n\rangle$ where each $\mathcal C_i$ admits a stability condition.
Following~\cite{halpernleistner2024noncommutativeminimalmodelprogram}, we call such SODs \textit{polarizable}.
A hope is expressed in~\cite{halpernleistner2024noncommutativeminimalmodelprogram}, that the Jordan--H\"older property might hold for this restricted class of SODs.

If we consider the example coming from the Godeaux~\cite{BOHNING2014479}, then one of the SODs is constructed already in~\cite{zbMATH06245837} and of the form
\begin{equation}\label{godeauxSOD}
D^b(X)=\langle\mathcal A,E_1,\ldots,E_{11}\rangle
\end{equation}
where $E_i$ are exceptional objects and $K_0(\mathcal A)=\mathbb Z/5$.
In particular, $\mathcal A$ has no stability conditions and so this SOD is not polarizable.

\subsection*{A nice category without stability condition}

Does Bondal's example provide a counterexample to the Jordan--H\"older property for \textit{polarizable} SODs?
This reduces to the following question: Does the category $P^{\perp}\subset D^b(Q)$ admit a stability condition?
Our first main result gives a negative answer to this question. 

\begin{theorem}[\Cref{thm:Existense of stab}]
The triangulated category $P^{\perp}$, equipped with the standard grading, does not admit a stability condition.
\end{theorem}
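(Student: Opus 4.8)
The plan is to show that a stability condition on $P^{\perp}$ would force an equivalence $P^{\perp}\simeq D^b(E)$ with $E$ an elliptic curve, and then to obstruct this by exhibiting a flat $\mathbb{A}^1$-family of pairwise orthogonal bricks in $P^{\perp}$ that cannot exist in $D^b(E)$. First I would record the basic invariants of $P^{\perp}$: from the semiorthogonal decomposition $D^b(Q)=\langle P^{\perp},P\rangle$ and the Euler form of $\mathbf k Q/(a_2b_1,b_2a_1)$ one reads off that $P^{\perp}$ is a smooth and proper pre-triangulated dg category with $K_0(P^{\perp})\cong\mathbb{Z}^2$, that the Euler pairing on it is unimodular and skew-symmetric, and that the Serre functor acts as $-\mathrm{id}$ on $K_0$. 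The structural input I expect to require real work --- and where the hypothesis on the grading is used --- is that $P^{\perp}$ is in fact $1$-Calabi--Yau, i.e.\ $S_{P^{\perp}}\cong[1]$; this I would extract either from the realization of $D^b(Q)$ as a partially wrapped Fukaya category of a surface (the relation algebra is gentle) together with the explicit Serre functor, or by a direct computation on an explicit set of generators.

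Assume now $\sigma=(Z,\mathcal P)$ is a stability condition on $P^{\perp}$ and set $\mathcal A=\mathcal P((0,1])$, the heart of the associated bounded $t$-structure. For $X,Y\in\mathcal A$ the $1$-Calabi--Yau property gives $\mathrm{Ext}^{n}_{\mathcal A}(X,Y)=\mathrm{Hom}_{P^{\perp}}(X,Y[n])\cong\mathrm{Hom}_{P^{\perp}}(Y,X[1-n])^{\vee}=\mathrm{Ext}^{1-n}_{\mathcal A}(Y,X)^{\vee}$, which vanishes for $n\geq 2$; thus $\mathcal A$ is hereditary. Since $P^{\perp}$ is smooth and proper, $\mathcal A$ is moreover $\mathrm{Ext}$-finite and carries a Serre functor, and $K_0(\mathcal A)=\mathbb{Z}^2$ with unimodular skew-symmetric Euler form. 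By the classification of hereditary abelian categories with Serre duality and finitely generated Grothendieck group (Reiten--Van den Bergh), the shape of the Euler form rules out all cases except $\mathcal A\simeq\mathrm{Coh}(E)$ for an elliptic curve $E$; hence $P^{\perp}\simeq D^b(\mathcal A)\simeq D^b(E)$. (A point to be handled with care here is verifying that $\mathcal A$ is noetherian, so that the classification applies.)

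It remains to contradict $P^{\perp}\simeq D^b(E)$. For $\beta\in\mathbf k$ let $N_{\beta}$ be the $A$-module with dimension vector $(0,1,1)$ on which $a_2$ acts by the identity and $b_2$ by $\beta$. One checks directly that $N_{\beta}\in P^{\perp}$, that $\mathrm{End}(N_{\beta})=\mathbf k$, that $\mathrm{Hom}(N_{\beta},N_{\beta'})=\mathrm{Ext}^1(N_{\beta},N_{\beta'})=0$ for $\beta\neq\beta'$, and that all $N_{\beta}$ represent one and the same primitive class $w\in K_0(P^{\perp})$; the family $\beta\mapsto N_{\beta}$ comes from an evident flat family of objects of $P^{\perp}$ over $\mathbb{A}^1$. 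Transporting it through $P^{\perp}\simeq D^b(E)$, each $N_{\beta}$ becomes an indecomposable object of $D^b(E)$ with one-dimensional endomorphism algebra, hence, $\mathrm{Coh}(E)$ being hereditary, a shift of a stable sheaf; as all of them carry the fixed primitive class, after a single global shift they lie in one moduli space $M_E(r_0,d_0)\cong E$ of stable sheaves, and the family induces a morphism $\mathbb{A}^1\to M_E(r_0,d_0)\cong E$. This morphism is non-constant, since the $N_{\beta}$ are pairwise non-isomorphic, which is impossible because $E$ admits no non-constant morphism from $\mathbb{A}^1$. Hence $P^{\perp}$ admits no stability condition. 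The two steps where genuine work is needed are establishing that $P^{\perp}$ is $1$-Calabi--Yau and placing $\mathcal A$ under the Reiten--Van den Bergh classification.
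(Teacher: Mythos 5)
The decisive step of your argument --- that $P^{\perp}$ is $1$-Calabi--Yau --- is not merely ``real work''; it is false, and the proof collapses without it. The Serre functor of a partially wrapped Fukaya category $\cW(S,M,\eta)$ is not a shift: up to shift it is the autoequivalence induced by pushing the stops once around the boundary, which for $(S,\{p\})$ is the boundary Dehn twist $\tau_{\partial}$. Since $S$ has genus one, $\tau_{\partial}$ is a nontrivial (infinite-order, central) mapping class and acts nontrivially on isotopy classes of graded arcs, hence on isomorphism classes of indecomposable objects; so $S_{P^{\perp}}\not\cong[1]$, and indeed no power of $S_{P^\perp}$ is a shift. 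You can also see this internally to the paper: Sung's earlier result, cited in the introduction, rules out only \emph{Serre-invariant} stability conditions, and the entire point of the theorem is to remove that hypothesis --- if $S_{P^\perp}$ were $[1]$, every stability condition would automatically be Serre-invariant and there would be nothing left to prove. What is true, and what you correctly computed, is that the \emph{numerical} invariants match an elliptic curve: the Euler form on $K_0(P^{\perp})\cong\Z^2$ is the standard unimodular symplectic form and the Serre functor acts as $-\id$ on $K_0$. This numerical coincidence is precisely what makes the example delicate, but it does not give vanishing of $\mathrm{Ext}^{\geq 2}$ in a stability heart, so hereditariness fails, the Reiten--Van den Bergh classification never gets off the ground, and the reduction to $D^b(E)$ is unavailable. (Two smaller points: even granting hereditariness, stability hearts need not be noetherian, so that classification would not apply directly --- you flag this yourself; and your family must exclude $\beta=0$, since $\Hom(P,N_0)\neq 0$ and hence $N_0\notin P^{\perp}$.)

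The paper's actual proof is of a completely different nature and uses no Calabi--Yau or classification input: $P^{\perp}$ admits no silting object (Chang--Jin--Schroll), hence no stability condition with finite-length heart, hence by Broomhead--Pauksztello--Ploog--Woolf no stability condition with a gap in its set of phases; gluing such a $\sigma_1$ with a stability condition on $\langle P\rangle$ across $D^b(Q)=\langle P^{\perp},P\rangle$ yields an open set of gap-free stability conditions on $D^b(Q)$, contradicting Takeda's result that generic stability conditions on the twice-stopped genus-one Fukaya category do have a gap. The one piece of your argument worth keeping is the $\mathbb{G}_m$-family $N_{\beta}$: it does show $P^{\perp}\not\simeq D^b(E)$, but that statement alone does not obstruct the existence of stability conditions.
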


This example was previously considered by Sung~\cite{Sung2022RemarksOT}, who proved the non-existence of stability conditions invariant under the Serre functor $S$, i.e. a $\sigma$ with $S\cdot\sigma=\sigma\cdot g$, for some $g\in\widetilde{\mathrm{GL}}^+(2,\mathbb R)$.

This is to our knowledge the first example of a smooth and proper pre-triangulated dg-category $\mathcal A$ with $\mathrm{rk}(K_0(\mathcal A))=2$ which does not admit a stability condition.
It would be interesting to find a general obstruction which explains our result.
Note that the ``phantom'' $\mathcal A$ as in~\eqref{godeauxSOD} provides an example of a smooth and proper pre-triangulated dg-category with $\mathrm{rk}(K_0(\mathcal A))=0$ which does not admit a stability condition, and $\mathcal C\coloneqq\langle \mathcal A,E_1\rangle\subset D^b(X)$ provides an example with $\mathrm{rk}(K_0(\mathcal C))=1$, since any stability condition on $\mathcal C$ would need to have a finite length heart generated by a single simple object, which contradicts $K_0(\mathcal C)=\mathbb Z\oplus\mathbb Z/5$. We thank Daniel Halpern--Leistner for pointing this out to us.

\begin{question}
Does $P^{\perp}$ admit a bounded $t$-structure? (See Section~\ref{sec:Bondal_quiver} for a description of $P^\perp$ in terms of a graded gentle algebra.)
Known deep obstructions~\cite{AGH,Neeman1,Neeman2} do not apply in this case.
\end{question}

After this paper first appeared as a preprint, Shengxuan Liu informed us that the question is answered positively in his upcoming joint work with Perry~\cite{Shengxuan}.

Our proof of \Cref{thm:Existense of stab} is based on a geometric interpretation of $D^b(Q)$ and $P^{\perp}$ as Fukaya categories of surfaces.
Indeed, $D^b(Q)$ is the Fukaya category of a genus 1 surface with one boundary component and two marked points on the boundary, and $P^{\perp}$ is the Fukaya category of a genus 1 surface with one boundary component and \textit{one} marked point on the boundary, in both cases with the standard grading (constant line field).
This correspondence is recounted in Section~\ref{sec:Bondal_quiver}.
As a consequence, we also obtain the following.

\begin{corollary}
Let $S$ be a compact oriented surface with boundary, $M\subset\partial S$ a set of marked points such that every component of $\partial S$ contains a marked point, and $\eta$ an arbitrary grading structure (line field) on $S$. Then the following are equivalent:
\begin{enumerate}
    \item The Fukaya category $\mathcal W(S,M,\eta)$ admits a stability condition.
    \item $S$ is not a genus one surface with a single boundary component, one marked point on the boundary, and $\eta$ the standard grading.
\end{enumerate}
\end{corollary}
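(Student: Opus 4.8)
The plan is to prove the two implications separately, reducing first to connected $S$: for disconnected $S$ the category $\mathcal{W}(S,M,\eta)$ is the direct sum of the Fukaya categories of the connected components, and a direct sum of finitely many triangulated categories admits a stability condition exactly when each summand does (a stability condition on a finite product restricts to each factor, and conversely one glues the central charges and slicings).

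\textbf{The implication $(1)\Rightarrow(2)$.} I would argue by contraposition. Suppose $S$ is the genus one surface with one boundary component, a single marked point on the boundary, and $\eta$ the standard (constant) line field. By the identification recalled in Section~\ref{sec:Bondal_quiver} there is an exact equivalence $\mathcal{W}(S,M,\eta)\simeq P^{\perp}$, and \Cref{thm:Existense of stab} asserts that $P^{\perp}$ with its standard grading has no stability condition; hence neither does $\mathcal{W}(S,M,\eta)$. This direction is thus immediate from the main theorem.

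\textbf{The implication $(2)\Rightarrow(1)$.} Here the plan is to produce a stability condition for every connected $(S,M,\eta)$ which is not the excluded surface. I would use the Haiden--Katzarkov--Kontsevich correspondence: a \emph{flat structure} on $S$ --- a flat metric with geodesic boundary, corners precisely at the points of $M$ with angles in $\pi\mathbb{Z}_{\ge 1}$, and finitely many interior cone points with angles in $2\pi\mathbb{Z}_{\ge 1}$, whose horizontal foliation is homotopic to $\eta$ --- determines a stability condition on $\mathcal{W}(S,M,\eta)$, with central charge given by integrating the associated one-form over arcs and with heart generated by the finitely many saddle connections; so it suffices to build such a flat structure. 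Two reductions organize this. First, whenever $\mathcal{W}(S,M,\eta)$ is equivalent to the bounded derived category of a finite-dimensional algebra of finite global dimension --- which holds, for instance, for any disk, where the associated graded gentle algebra is directed --- it carries a bounded $t$-structure with a length heart and finitely many simple objects, and any such triangulated category with $K_0$ of finite rank admits a stability condition: assign the simples generic central charges in the open upper half-plane; the Harder--Narasimhan property is automatic for a length heart, and the support property holds because all simples then have positive imaginary part of central charge, so the imaginary part alone dominates a norm on $K_0$. Second, in the remaining cases a Gauss--Bonnet computation shows that the angle excess to be distributed among the corners at $M$ and the interior cone points equals $-2\pi\chi(S)>0$, and --- since homotopy classes of line fields on $S$ form an $H^1(S;\mathbb{Z})$-torsor --- realizing a prescribed $\eta$ reduces to choosing this distribution together with a flat structure of the right type in the appropriate relative homology class, which can be arranged whenever there is enough excess and enough free parameters. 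The residual work is then a finite case analysis over the topological types not settled by the first reduction, checking that the second reduction goes through in each, with exactly one failure: the genus one surface with one boundary component and a single marked point carrying the standard grading, which must fail by \Cref{thm:Existense of stab} and is precisely the excluded case, while the \emph{other} gradings on that surface, and all other surfaces, do admit a compatible flat structure.

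\textbf{The main obstacle.} I expect the crux to be this last case analysis, and above all the genus one surface with one boundary component: one must show that for every non-standard line field on it a flat structure exists --- necessarily with a boundary corner of angle $3\pi$, or with an interior cone point absorbing the curvature and a trivial corner --- while for the standard line field none does, so that the only exception is the one forced by \Cref{thm:Existense of stab}. This comes down to a delicate accounting of how corner and cone angles interact with the winding number of the horizontal foliation along the boundary and around the generators of $H_1(S)$. Once the small cases (low genus, few marked points) are pinned down, the higher-genus and many-boundary cases should follow uniformly by induction on $-\chi(S)$, cutting and regluing flat surfaces along arcs between marked points --- an operation under which the existence of a compatible flat structure, and hence of a stability condition, is preserved.
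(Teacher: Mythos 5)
Your direction $(1)\Rightarrow(2)$ matches the paper: it is exactly an application of \Cref{thm:Existense of stab} via the identification $P^{\perp}\cong\mathcal W(S,\{p\},\eta)$ from Section~\ref{sec:Bondal_quiver}, and the reduction to connected surfaces is harmless. The problem is $(2)\Rightarrow(1)$. You propose to build, for every non-excluded graded surface, a compatible flat structure in the sense of \cite{HKK17} and read off a stability condition. But the existence of such a flat structure for an arbitrary line field $\eta$ is precisely the hard content of that implication, and your argument for it never materializes: the Gauss--Bonnet count only shows that the total angle excess $-2\pi\chi(S)$ is positive, not that it can be distributed among boundary corners and interior cone points so that the horizontal foliation realizes the prescribed winding numbers of $\eta$ along every boundary component and every class in $H_1(S)$; you yourself defer this to a ``finite case analysis'' and a ``delicate accounting'' that is not carried out, and the proposed induction by cutting and regluing is not set up (cutting changes the marked points and the restriction of the line field in ways you do not control). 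Since the entire implication rests on this unproved existence statement, the proposal has a genuine gap.

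For comparison, the paper's route avoids flat structures entirely: by \cite[Theorem 1.1]{JSZ23}, read through \cite[Proposition 5.12]{chang2023recollementspartiallywrappedfukaya}, the category $\mathcal W(S,M,\eta)$ admits a silting object --- equivalently, by \cite[Theorem 6.1]{Koenig2012SiltingOS}, a bounded $t$-structure with finite length heart --- in exactly the non-excluded cases, and a length heart with finitely many simples always carries a stability condition by the generic-central-charge argument you sketch (that part of your write-up is fine). What remains is to exclude stability conditions whose heart is not of finite length in the exceptional case, and that is exactly \Cref{thm:Existense of stab}. To salvage your approach you would essentially have to prove the flat-structure existence theorem for all non-excluded $(S,M,\eta)$, which is a nontrivial result in its own right and strictly more than what the corollary requires.
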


If one restricts to stability conditions with finite length heart in the above statement, then this is already implicitly contained in the work of Jin--Schroll--Wang~\cite[Theorem 1.1]{JSZ23} in view of~\cite[Proposition 5.12]{chang2023recollementspartiallywrappedfukaya}. 
It only remains to exclude the possibility of a stability condition without finite length heart on the Fukaya category in question, which is our Theorem~\ref{thm:Existense of stab}.

\subsection*{A counterexample to Jordan--H\"older for polarizable SODs}

It turns out that if we slightly modify Bondal's example by changing the grading, then we do get counterexamples to the Jordan--H\"older property for polarizable SODs.

\begin{theorem}[\Cref{thm:Polarizable J-H}]
Let $\mathcal C=D^b(Q)$, where $Q$ is the quiver with relations as in \eqref{Q}, but with grading such that $|a_1|\neq|b_1|$ or $|a_2|\neq|b_2|$, or both. 
Equivalently, $\mathcal C$ is the Fukaya category of a genus 1 surface with one boundary component with two marked points on the boundary and with non-standard grading.
Then $\mathcal C$ does not satisfy the Jordan--H\"older property for polarizable SODs.
More precisely, there exists both an exceptional collection of length 3 and a polarizable SOD of the form $\langle P,P^{\perp}\rangle$ where $P^{\perp}$ does not contain any exceptional object.
\end{theorem}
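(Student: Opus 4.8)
The plan is to exploit the known dictionary (recounted in Section~\ref{sec:Bondal_quiver}) between $D^b(Q)$ with a chosen grading and the wrapped Fukaya category $\mathcal W(S,M,\eta)$ of a genus~$1$ surface $S$ with one boundary component, two marked points, and a line field $\eta$ determined by the degrees $|a_i|,|b_i|$. Under this equivalence, the exceptional object $P$ of~\eqref{Q'} corresponds to a particular embedded arc $\gamma\subset S$, and $P^{\perp}$ corresponds to the partially wrapped Fukaya category of the surface obtained by ``cutting'' $S$ along $\gamma$ (cf.~\cite{chang2023recollementspartiallywrappedfukaya}), which is again a genus~$1$ surface but with only \emph{one} marked point on the boundary, equipped with the induced line field $\eta'$. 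First I would make this geometric picture precise: record the SOD $\langle P, P^{\perp}\rangle$, identify $P^{\perp}$ with the Fukaya category of $(S',M',\eta')$, and observe that the standard full exceptional collection of length~$3$ (the three projectives of the acyclic quiver $Q$) persists for any grading, giving the two SODs to be compared.

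Next I would establish polarizability of $\langle P, P^{\perp}\rangle$. The factor generated by $P$ is equivalent to $D^b(\mathbf k)$, which trivially admits a stability condition, so the content is showing that $P^{\perp}$ admits one once the grading is non-standard. Here is where the hypothesis $|a_1|\neq|b_1|$ or $|a_2|\neq|b_2|$ enters decisively: the non-standard line field $\eta'$ on the once-marked genus~$1$ surface is \emph{not} the exceptional case singled out in Theorem~\ref{thm:Existense of stab} and its Corollary. So I would invoke the Corollary (equivalently, the finite-length-heart existence result of Jin--Schroll--Wang~\cite{JSZ23} via~\cite{chang2023recollementspartiallywrappedfukaya}) to conclude that $\mathcal W(S',M',\eta')\simeq P^{\perp}$ admits a stability condition. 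Combined with the previous paragraph, $\langle P, P^{\perp}\rangle$ is a polarizable SOD.

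It then remains to verify that $P^{\perp}$ contains no exceptional object, so that this polarizable SOD cannot be refined to, or be equivalent to, the length-$3$ exceptional collection — this is the failure of Jordan--H\"older. In Bondal's original (standard-grading) example this is classical; I would argue it is grading-independent. Translating to the surface model, exceptional objects of a graded surface Fukaya category correspond to graded arcs/curves of ``slope zero'' winding number that are moreover rigid, i.e. embedded arcs with trivial self-intersection and the correct total grading; on a genus~$1$ surface with a single boundary marked point one checks by a direct winding-number computation that no such arc exists (every embedded arc either has a self-intersection after wrapping, or the wrong parity of grading, forcing $\mathrm{Hom}^{\bullet}$ to be nontrivial in a degree that obstructs exceptionality). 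Alternatively, and perhaps more cleanly, one can argue algebraically: any exceptional object $E\in P^{\perp}\subset D^b(Q)$ would be an exceptional object of $D^b(Q)$ left- and right-orthogonal to $P$, and the description of $P^\perp$ as (the derived category of) a graded gentle algebra lets one enumerate indecomposables and check none is exceptional; since changing the grading only shifts the internal degrees of morphisms and does not alter the underlying $A_\infty$/quiver combinatorics controlling the (non)vanishing of graded $\mathrm{Hom}$-spaces up to shift, the absence of exceptional objects is inherited from the ungraded statement.

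The main obstacle I anticipate is the last step: rigorously transferring ``$P^{\perp}$ has no exceptional object'' from the standard grading to an arbitrary non-standard one. One must be careful that a different line field could, a priori, make some object exceptional that was not before (self-extensions live in shifted degrees that depend on the grading). The cleanest route is probably the surface-geometric one — classify (graded) exceptional objects as rigid graded arcs and show the relevant genus~$1$/one-marked-point surface simply has none, for \emph{any} line field — but one should double-check that rigidity plus embeddedness is exactly the right characterization of exceptionality in this wrapped setting, citing the appropriate results from the surface Fukaya category literature used elsewhere in the paper.
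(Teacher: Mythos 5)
Your setup and your polarizability argument match the paper's proof: the two SODs $\langle P_1,P_2,P_3\rangle$ and $\langle P^{\perp},P\rangle$ are exactly the ones used, and polarizability of $P^{\perp}=\cW(S,\{p\},\eta)$ for non-standard $\eta$ is obtained, as you propose, from the existence of a silting object (\cite[Theorem 1.1]{JSZ23}), hence of a bounded $t$-structure with length heart by K\"onig--Yang, hence of a stability condition by \cite[Lemma 5.2]{Bridgeland2006SpacesOS}. Up to that point you are reproducing the paper's argument.

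The genuine gap is in your final step, and you have correctly identified it yourself as the weak point. Two issues. First, your two suggested arguments for ``$P^{\perp}$ has no exceptional object'' (winding-number computations for rigid graded arcs, or transferring the ungraded statement across gradings) are only sketched, and the transfer argument as stated is not obviously sound: which closed curves support objects at all depends on the line field, so ``the combinatorics controlling graded $\mathrm{Hom}$-spaces up to shift is grading-independent'' needs a real proof, not an assertion. Second, and more importantly, even a complete proof that $P^{\perp}$ contains no exceptional object would not suffice: the polarizable Jordan--H\"older property (Definition~\ref{def:Polarizable J-H}) compares \emph{maximal} polarizable SODs, so you must show that $\langle P^{\perp},P\rangle$ is maximal, i.e.\ that $P^{\perp}$ admits \emph{no} nontrivial SOD at all --- otherwise one could a priori refine $\langle P^{\perp},P\rangle$ further and the comparison with $\langle P_1,P_2,P_3\rangle$ would not yield a contradiction. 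The paper closes both holes at once by citing the classification of SODs of partially wrapped Fukaya categories of surfaces in terms of proper sequences of good cuts of the marked surface (\cite[Theorem 3.20]{kopiva2022semiorthogonal}): the genus-one surface with a single marked point on its single boundary component admits no good cuts, so $\cW(S,\{p\},\eta)$ is SOD-indecomposable for any grading. This is a purely topological statement, hence manifestly grading-independent, and it implies in particular the absence of exceptional objects (any exceptional object in a smooth proper category generates an admissible subcategory, which here would have to be proper since $\rk K_0(P^{\perp})=2$). You should replace your last paragraph with an appeal to this classification, or supply an equally rigorous substitute.
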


One may generalize these examples by increasing the genus, in which case the condition on the grading becomes unnecessary.

Combining the above with a powerful theorem of Orlov~\cite[Theorem 1.10]{Orlov2015GeometricRO} yields the following.

\begin{corollary}
There exists a smooth projective variety $X$ such that $D^b(X)$ does not satisfy the Jordan--H\"older property for polarizable SODs.
\end{corollary}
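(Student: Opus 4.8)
The plan is to combine the previous theorem (Theorem~\ref{thm:Polarizable J-H}), which produces a dg category $\mathcal C=D^b(Q)$ with non-standard grading that fails Jordan--H\"older for polarizable SODs, with Orlov's gluing/embedding theorem \cite[Theorem 1.10]{Orlov2015GeometricRO}. Orlov's theorem states that any smooth and proper (pre-triangulated) dg category admitting a classical generator with bounded Ext-groups --- in particular the perfect derived category of any finite-dimensional algebra of finite global dimension --- embeds as an admissible subcategory of $D^b(Y)$ for some smooth projective variety $Y$. So first I would check that $\mathcal C=D^b(Q)$, with the relevant grading, is indeed smooth and proper: since $Q$ is a finite acyclic quiver with relations, the graded algebra $\k Q/(\text{relations})$ is finite-dimensional of finite global dimension, hence $D^b(Q)=\mathrm{Perf}(\k Q/I)$ is smooth and proper. (The grading changes nothing about finiteness of dimension or global dimension.) Therefore Orlov applies and gives a smooth projective $X$ with a semiorthogonal decomposition $D^b(X)=\langle \mathcal C, \mathcal D\rangle$ for some admissible $\mathcal D$.

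Next I would transport the two competing SODs of $\mathcal C$ up to $D^b(X)$. If $\mathcal C=\langle E_1,E_2,E_3\rangle$ is the exceptional collection and $\mathcal C=\langle P,P^\perp\rangle$ is the polarizable SOD from Theorem~\ref{thm:Polarizable J-H}, then we obtain
\begin{equation*}
D^b(X)=\langle \mathcal D, E_1,E_2,E_3\rangle
\quad\text{and}\quad
D^b(X)=\langle \mathcal D, P, P^\perp\rangle .
\end{equation*}
To conclude that $D^b(X)$ itself fails Jordan--H\"older for polarizable SODs, I need $\mathcal D$ to be polarizable, i.e. to admit a stability condition. The simplest way to arrange this is to invoke the refined form of Orlov's theorem in which the complementary piece $\mathcal D$ is itself generated by an exceptional collection (Orlov's construction produces $D^b(X)$ from $\mathcal C$ together with a bunch of exceptional objects coming from the projective bundle / blow-up steps); since $D^b$ of a point admits the standard stability condition and exceptional collections give rise to stability conditions on the category they generate (the heart being the finite-length abelian category of iterated extensions, with any strictly monotone central charge), $\mathcal D$ is polarizable. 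Hence both displayed SODs are polarizable. On the other hand they are non-equivalent as SODs: in the first, every factor is generated by an exceptional object, whereas $P^\perp$ in the second contains no exceptional object at all (by Theorem~\ref{thm:Polarizable J-H}), so no reordering/mutation matches up the factors. This contradicts the Jordan--H\"older property for $D^b(X)$.

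The main obstacle --- and the one point that needs genuine care rather than bookkeeping --- is ensuring that the complement $\mathcal D$ in Orlov's embedding is polarizable, and more precisely that the resulting pair of SODs of $D^b(X)$ is a bona fide counterexample rather than something that becomes Jordan--H\"older-equivalent after passing to the larger category. For the first point one should cite the precise statement of \cite[Theorem 1.10]{Orlov2015GeometricRO} in which $D^b(X)=\langle \mathcal C, \mathcal E\rangle$ with $\mathcal E$ admitting a full exceptional collection (this is how the proof proceeds, via blow-ups and projective bundles over $\mathbb P^n$), so that $\mathcal E$ --- and hence any admissible $\mathcal D$ of this shape --- is polarizable. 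For the second point, one argues as above that an equivalence of the two SODs would in particular have to pair $P^\perp$ with one of the exceptional factors or with $\mathcal D$; the former is impossible since $P^\perp$ has no exceptional object, and the latter is impossible by comparing, say, ranks of Grothendieck groups or by noting that $\langle P,P^\perp\rangle=\mathcal C$ is not equivalent to $\mathcal D$. One should also remark, as the paper already does in an analogous situation, that the phrase ``does not satisfy the Jordan--H\"older property for polarizable SODs'' is insensitive to the ambient choices: it is a statement about the existence of two polarizable SODs with non-matching factors, which we have exhibited. A final sentence can note that, just as in Kuznetsov's reworking of the B\"ohning--Graf von Bothmer--Sosna example \cite{kuznetsov2013simplecounterexamplejordanholderproperty}, this $X$ can be taken to be an iterated blow-up of a projective space, making the counterexample quite explicit.
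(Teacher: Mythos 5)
Your proposal is correct and follows essentially the same route as the paper: the paper's own justification is the one-line remark that by Orlov's theorem one can find $X$ with $D^b(X)=\langle\mathcal C,E_1,\ldots,E_n\rangle$, $E_i$ exceptional, and then the two polarizable maximal SODs of $\mathcal C$ from Theorem~\ref{thm:Polarizable J-H} (one by exceptional objects, one with factor $P^\perp$ containing no exceptional object) extend to non-matching polarizable maximal SODs of $D^b(X)$. Your additional care about the polarizability of the complement and the non-equivalence of the two resulting decompositions just fills in the details the paper leaves implicit.
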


Namely, one can find an $X$ such that $D^b(X)=\langle\mathcal C,E_1,\ldots,E_n\rangle$, where $\mathcal C$ is as in Theorem~\ref{thm:Polarizable J-H} and $E_i$ are exceptional.

\subsection{Contents}

In \Cref{sec:pre}, we review the concepts of the notions of Bridgeland stability conditions on triangulated categories, the polarizable Jordan--H\"{o}lder property, partially wrapped Fukaya categories, and graded gentle algebras. In \Cref{sec:Bondal_quiver}, we explain the geometric interpretation of the bounded derived category of the quiver \eqref{Q} and the object \eqref{Q'} in terms of Fukaya categories of surfaces. In \Cref{sec:main results}, we demonstrate that $P^{\perp}=\cW(S,\{p\},\eta)$ equipped with the standard grading $\eta$ does not admit a stability condition and confirm that $\cW(S,M,\eta)=\mathrm{Perf}(\k Q)$ with non-standard grading does not satisfy the Jordan--H\"older property for polarizable SODs.

\subsection*{Acknowledgements}
FH thanks Wen Chang, Daniel Halpern-Leistner, Sibylle Schroll, Benjamin Sung, and Alex Takeda for useful discussions.
DW thanks the University of Southern Denmark for providing perfect working conditions during his visit, during which this research was conducted.
DW is also thankful to Yu Qiu for his continuous support and patience throughout his research.
FH is supported by the VILLUM FONDEN, VILLUM Investigator grant 37814 and Sapere Aude grant 3120-00076B from the Independent Research Fund Denmark (DFF).
This paper is partly a result of the ERC-SyG project Recursive and Exact New Quantum Theory (ReNewQuantum) which received funding from the European Research Council (ERC) under the European Union's Horizon 2020 research and innovation programme under grant agreement No 810573.

%=========================================================
\section{Preliminaries}
\label{sec:pre}
In this section, we review the notions of Bridgeland stability conditions on triangulated categories, the polarizable Jordan--H\"{o}lder property, the partially wrapped Fukaya categories and graded gentle algebras, referring to \cite{MR2373143, Bondal1995SemiorthogonalDF, Bondal2002DerivedCO,halpernleistner2024noncommutativeminimalmodelprogram,HKK17,Lekili2018DerivedEO,OPS18,APS23}. All triangulated categories we consider in this paper have finitely generated Grothendieck group, $K_0(\cD)$. 

\subsection{Bridgeland stability conditions}

A \emph{Bridgeland pre-stability condition} $\sigma=(Z, \mathcal P)$ on a triangulated category $\mathcal D$ is given by a group homomorphism $Z: K_0(\cD)\to \mathbb C$, termed the \emph{central charge}, and a collection of full additive subcategories $\mathcal P(\phi)\subset \mathcal D$ for each $\phi\in \mathbb R$, called the \emph{slicing}. This pair is subject to the following axioms:
\begin{enumerate}
\item[(a)] if $0\ne E\in \mathcal P(\phi)$, then $Z(E) \in \mathbb R_{>0}\cdot\mathrm{exp}(\pi\textbf i\phi)$,
\item[(b)] for all $\phi\in \mathbb R$, $\mathcal P(\phi +1) = \mathcal P(\phi)[1]$,
\item[(c)] if $\phi_1>\phi_2$ and $A_i\in \mathcal P(\phi_i)$, $i=1,2$, then ${\rm Hom}_{\mathcal D}(A_1, A_2) = 0$,
\item[(d)] any $0\neq E\in \mathcal D$ has a {Harder--Narasimhan filtration} (HN filtration).
\end{enumerate}

For any $0\neq E\in\cD$ one defines $\phi^+_{\sigma}(E)$ and $\phi^-_{\sigma}(E)$, as the maximal and minimal phases, respectively, of semistable factors appearing in the HN filtration of $E$. An object $E\in \mathcal P(\phi)$ for some $\phi\in \mathbb R$ is called semistable, and in such a case, $\phi = \phi^{\pm}_{\sigma}(E)$. Moreover, if $E$ is a simple object in $\mathcal P(\phi)$, it is said to be stable. We define $\mathcal P(I)$ for an interval $I$ in $\mathbb R$ as
 \[
 \mathcal P(I) = \left\{E\in \mathcal D\ |\ \phi^{\pm}_{\sigma}(E)\in I\right\}\cup\{0\}.
 \]
Consequently, for any $\phi\in \mathbb R$, both $\mathcal P([\phi, \infty))$ and $\mathcal P((\phi, \infty))$ are t-structures in $\mathcal D$ with hearts $\cP([\phi,\phi+1))$ and $\cP((\phi,\phi+1])$, respectively. According to \cite{MR2373143}, each category $\cP(\phi)\subset\cD$ is abelian and $\cP(I)\subset\cD$ is quasi-abelian for any interval $I\subset\mathbb R$ of length $<1$. 

We now restrict our attention to Bridgeland pre-stability conditions that fulfill the \emph{support property} (cf. \cite{kontsevich2008stabilitystructuresmotivicdonaldsonthomas}). 
For this, we fix a finite rank lattice $\Lambda$ with a surjective group homomorphism $v:K_0(\cD)\twoheadrightarrow\Lambda$. 
A Bridgeland pre-stability condition $\sigma=(Z,\mathcal P)$ satisfies the support property (with respect to $(\Lambda,v)$) if $Z$ factors through $v$, i.e. $Z:K_0(\cC)\xrightarrow[]{v}\Lambda\to\mathbb C$, and there exists a constant $C>0$ and a norm $\Vert\cdot\Vert$ on $\Lambda\otimes_\mathbb Z{\mathbb R}$ such that for all $\sigma$-semistable objects $E\in\cD$, the following holds:
\begin{equation}
\label{support_property}
\Vert v(E)\Vert\le C\vert Z(E)\vert.
\end{equation}
A Bridgeland pre-stability condition that satisifes the support property is referred to as a \emph{Bridgeland stability condition}. Unless there is a specific need to emphasis $(\Lambda,v)$, we will denote the set of stability conditions with respect to $(\Lambda,v)$ simply by $\Stab(\cD)$. According to \cite[Proposition 5.3]{MR2373143}, a stability condition can also be characterized by a pair $(Z, \cH)$, in which $\cH$ constitutes a heart of the category $\cD$, and $Z$ represents a stability function on $\cH$ that adheres to the Harder-Narasimhan condition. 

%=========================================================
%=========================================================

\subsection{Polarizable Jordan--H\"{o}lder property}

A \emph{semiorthogonal decomposition} of a triangulated category $\cD$, denoted by $\cD=\langle\cD_1,\dots,\cD_n\rangle$, is a totally ordered collection of full triangulated subcategories of $\cD_i\subset\cD$ such that 
\begin{enumerate}[$(1)$]
\item  $\Hom_{\cD}(\cD_i,\cD_j)=0$ for all $1\le j<i\le n$;
\item $\cD=\mathrm{thick}\langle\cD_1,\dots,\cD_n\rangle$, i.e. $\cD$ is the smallest triangulated subcategory containing $\cD_1$, \dots, $\cD_n$.
\end{enumerate}
%===================================================================
 An object $E$ is \emph{exceptional} if $\Hom(E,E)=\k$ and $\Hom(E,E[t])=0$ for $t\ne0$. An \emph{exceptional collection} consists of exceptional objects ${E_1, E_2, \dots, E_m}$ such that $\Hom(E_i,E_j[t])=0$ for all $i>j$ and all $t\in\mathbb Z$. An exceptional collection in $\cD$ yields a semiorthogonal decomposition
\[
\cD=\langle\cA,E_1,\dots,E_m\rangle,
\]
where $E_i$ represents the subcategory generated by the same exceptional object, and 
\[
\cA:=\langle E_1,\dots,E_m\rangle^{\perp}=\left\{A\in\cD\colon\Hom(E_i,A[p]=0),\text{ for all } p,i\right\}. 
\]
Similarly, it also induces the semiorthogonal decomposition
\[
\cD=\langle E_1,\dots,E_m,\cA'\rangle
\]
where \[
\cA'={^{\perp}\langle E_1,\dots,E_m\rangle}=\left\{A'\in\cD\colon\Hom(A',E_i[p])=0,\text{ for all } p,i\right\}.
\]If $\cA=0$, then the exceptional collection ${E_1,\dots,E_m}$ is referred to as \emph{full}.

A triangulated category $\cT$ has the \textit{Jordan--H\"{o}lder property} if for any pair 
\[
\cT=\langle\cA_1,\dots,\cA_n\rangle=\langle\cB_1,\dots,\cB_m\rangle
\]
of \emph{maximal} semiorthogonal decompositions, i.e. each factor does not have any nontrivial semiorthogonal decomposition, one has $m=n$ and there is a permutation $\sigma\in S_m$ such that $\cB_i\cong\cA_{\sigma(i)}$ for each $1\le i\le m$. \cite{kuznetsov2013simplecounterexamplejordanholderproperty} and \cite{BOHNING2014479} present examples in the derived categories of projective varieties that demonstrate the failure of the Jordan--Hölder property. We now turn our attention to the polarizable Jordan--Hölder property:

\begin{definition}
\label{def:Polarizable J-H}
A triangulated category $\cT$ is said to possess the polarizable Jordan--Hölder property if it fulfills the Jordan--Hölder property for all \emph{polarizable maximal semi-orthogonal decompositions}, meaning that each factor in the maximal semiorthogomal decomposition admits a stability condition.
\end{definition}

%\cite{halpernleistner2024noncommutativeminimalmodelprogram} predicts the polarizable Jordan-H\"{o}lder property is valid for triangulated categories. In this paper, we will present a counterexample demonstrating that the polarizable Jordan-Hölder property does not always hold.

\subsection{Partially wrapped Fukaya categories and graded gentle algebras}
\label{sec:PWF&GGA}

The partially wrapped Fukaya category $\cW(S,M,\eta)$, with coefficients in a field $\k$, is associated with a graded surface $(S,M,\eta)$, where $S$ denotes an oriented, connected, compact surface with a non-empty boundary $\partial S$. The set $M$ is a collection of marked points on $\partial S$, referred to as stops. Additionally, $\eta$ represents a line field defined on $S$. Given a graded surface $(S,M,\eta)$, an \emph{arc} of $S$ is defined as an embedded curve $\alpha$ such that $\partial\alpha\subset M$, $\alpha$ is transverse to $M$ and $\partial\alpha$ is not isotopic to a subset of $M$. A \emph{boundary arc} is an arc isotopic to a boundary component not lying within $M$. An \emph{arc system} is defined as a collection $A=\{\alpha_i\}$ of arcs that are pairwise disjoint and non-isotopic. This system is called a \emph{full formal arc system} if it results in a polygonal decomposition of the surface where each polygon contains exactly a single boundary arc that is not belonging to $A$. Each arc $\alpha$ is regarded as a \emph{graded arc} by fixing a homotopy $\widetilde{\alpha}$ between the restriction of $\eta$ to $\alpha$ and the line field  $\dot{\alpha}$ along $\alpha$ determined by the tangent lines of $\alpha$. For two graded arcs $\alpha$ and $\beta$ that meet a common endpoint $q\in M$, an \emph{oriented angle} $a$ from $\alpha$ to $\beta$ at $q$ is an oriented, embedded path $u_a$ in a neighborhood of $q$, connecting $\alpha$ to $\beta$ with $q$ to the right, and ensuring that $u_a$ does not intersect any other arc in $A$.
The \emph{degree} of the oriented angle $a$ is determined by the grading of the concatenation of $\alpha$ and $\beta$. We refer to \cite{HKK17} for a combinatorial description of the partially wrapped Fukaya category. 

Recall that a \emph{graded gentle algebra} is a graded algebra of the form $A=\k Q/I$ where $Q=(Q_0,Q_1)$ is a finite graded quiver (i.e. to each arrow is assigned a degree in $\mathbb Z$) and $I$ an ideal of the path algebra $\k Q$ generated by paths of length two such that:
\begin{enumerate}
    \item Each vertex in $Q_0$ is the source of at most two arrows and the target of at most two arrows.
    \item For each arrow $\alpha\in Q_1$, there is at most one arrow $\beta$ such that $0\ne\alpha\beta\in I$ and at most one arrow $\gamma$ such that $0\ne\gamma\alpha\in I$. Furthermore, there is at most one arrow $\beta'$ such that $\alpha\beta'\notin I$ and at most one arrow $\gamma'$ such that $\gamma'\alpha\notin I$.
\end{enumerate}
For a full formal arc system $A=\{\alpha_i\}$ on a graded surface $(S,M,\eta)$, one can construct a graded gentle algebra $\k Q(A)/I(A)$ where:
\begin{enumerate}
    \item The vertices of the quiver $Q(A)$ correspond one-to-one with the arcs in $A$;
    \item Each oriented angle between arcs $\alpha_i$ to $\alpha_j$ induces an arrow $a_{ij}$ from the vertices $\alpha_i$ and $\alpha_j$ with the degree being the same as the degree of the oriented angle;
    \item The ideal $I(A)$ is generated by the length two paths of the form $ab$ for all oriented angles with $u_a:\alpha\to\beta$ and $u_b:\beta\to\gamma$.
\end{enumerate}

According to \cite{Lekili2018DerivedEO,OPS18}, every graded gentle algebra can be realized by some graded surface. We now review the following result:
\begin{theorem}[\cite{HKK17,Lekili2018DerivedEO}]
\label{equi:PWF-GGA}
Let $(S,M,\eta)$ be the graded surface associated to a graded gentle algebra $A$. If $A$ is homologically smooth and proper, then we have the triangle equivalence 
\[
\cW(S,M,\eta)\cong \mathrm{Perf}(A).
\]
\end{theorem}

%change D^b to Perf

%=========================================================
%=========================================================
\section{Genus one examples}
\label{sec:Bondal_quiver}

In this section, we explain the (well known to experts) geometric interpretation of the bounded derived category of the quiver \eqref{Q} and the object \eqref{Q'} in terms of Fukaya categories of surfaces.
Fix throughout a coefficient field $\k$.

Let $S$ be an oriented surface of genus one with one boundary component, $M=\{p,q\}\subset\partial S$ a pair of marked points on the boundary of $S$, and $\eta$ and arbitrary grading on $S$.
The Fukaya category $\cW(S,M,\eta)$ is a smooth and proper $A_\infty$-category over $\k$.

We choose a full formal arc system $A$ on $(S,M)$ with three arcs as indicated on the right in Figure~\ref{figure:marked surfaces}.
The corresponding graded gentle algebra has underlying ungraded algebra described by the quiver with relations $Q$ as in \eqref{Q}, where the three vertices correspond to the three arcs, $a_1$ and $a_2$ come from the intersections of the arcs at $p\in M$ and $b_1$, and $b_2$ come from the intersections of the arcs at $q\in M$.
The $\mathbb{Z}$-grading on the arrows is determined by $\eta$.
The three arcs correspond to three objects, $P_1,P_2,P_3$, of $\cW(S,M,\eta)=\mathrm{Perf}(\k Q)$.

%\FH{Why is there a bigger gap below than figure than above it? Can this be fixed?}
%delete [h] from \begin{figure}
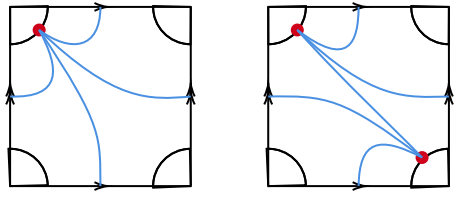
\begin{figure}
\tikzset{every picture/.style={line width=0.75pt}}
\begin{tikzpicture}[x=0.75pt,y=0.75pt,yscale=-1,xscale=1]
\draw  [draw opacity=0] (306.03,166.9) .. controls (306.02,166.76) and (306.02,166.62) .. (306.01,166.48) .. controls (305.81,156.14) and (314.01,147.58) .. (324.34,147.34) -- (324.79,166.11) -- cycle ; \draw   (306.03,166.9) .. controls (306.02,166.76) and (306.02,166.62) .. (306.01,166.48) .. controls (305.81,156.14) and (314.01,147.58) .. (324.34,147.34) ;  
%Straight Lines
\draw    (105.85,75.97) -- (105.85,166.11) ;
\draw [shift={(105.85,114.44)}, rotate = 90] [color={rgb, 255:red, 0; green, 0; blue, 0 }  ][line width=0.75]    (10.58,-1.97) .. controls (8.19,-0.84) and (6.01,-0.18) .. (4.03,0) .. controls (6.01,0.18) and (8.19,0.84) .. (10.58,1.97)(6.56,-1.97) .. controls (4.17,-0.84) and (1.99,-0.18) .. (0,0) .. controls (1.99,0.18) and (4.17,0.84) .. (6.56,1.97)   ;
%Straight Lines  
\draw [color={rgb, 255:red, 0; green, 0; blue, 0 }  ,draw opacity=1 ][line width=0.75]    (105.85,75.97) -- (195.99,75.97) ;
\draw [shift={(154.52,75.97)}, rotate = 180] [color={rgb, 255:red, 0; green, 0; blue, 0 }  ,draw opacity=1 ][line width=0.75]    (6.56,-1.97) .. controls (4.17,-0.84) and (1.99,-0.18) .. (0,0) .. controls (1.99,0.18) and (4.17,0.84) .. (6.56,1.97)   ;
%Straight Lines
\draw    (105.85,166.11) -- (195.99,166.11) ;
\draw [shift={(154.52,166.11)}, rotate = 180] [color={rgb, 255:red, 0; green, 0; blue, 0 }  ][line width=0.75]    (6.56,-1.97) .. controls (4.17,-0.84) and (1.99,-0.18) .. (0,0) .. controls (1.99,0.18) and (4.17,0.84) .. (6.56,1.97)   ;
%Straight Lines 
\draw    (195.99,75.97) -- (195.99,166.11) ;
\draw [shift={(195.99,114.44)}, rotate = 90] [color={rgb, 255:red, 0; green, 0; blue, 0 }  ][line width=0.75]    (10.58,-1.97) .. controls (8.19,-0.84) and (6.01,-0.18) .. (4.03,0) .. controls (6.01,0.18) and (8.19,0.84) .. (10.58,1.97)(6.56,-1.97) .. controls (4.17,-0.84) and (1.99,-0.18) .. (0,0) .. controls (1.99,0.18) and (4.17,0.84) .. (6.56,1.97)   ;
%Shape: Arc
\draw  [draw opacity=0] (124.62,75.55) .. controls (124.62,75.69) and (124.63,75.83) .. (124.63,75.97) .. controls (124.63,86.31) and (116.26,94.71) .. (105.92,94.75) -- (105.85,75.97) -- cycle ; \draw  [color={rgb, 255:red, 0; green, 0; blue, 0 }  ,draw opacity=1 ] (124.62,75.55) .. controls (124.62,75.69) and (124.63,75.83) .. (124.63,75.97) .. controls (124.63,86.31) and (116.26,94.71) .. (105.92,94.75) ;  
%Shape: Arc 
\draw  [draw opacity=0] (195.88,94.75) .. controls (195.74,94.75) and (195.6,94.74) .. (195.46,94.74) .. controls (185.12,94.45) and (176.96,85.85) .. (177.22,75.51) -- (195.99,75.97) -- cycle ; \draw   (195.88,94.75) .. controls (195.74,94.75) and (195.6,94.74) .. (195.46,94.74) .. controls (185.12,94.45) and (176.96,85.85) .. (177.22,75.51) ;  
%Shape: Arc 
\draw  [draw opacity=0] (177.23,166.9) .. controls (177.22,166.76) and (177.22,166.62) .. (177.21,166.48) .. controls (177.01,156.14) and (185.21,147.58) .. (195.54,147.34) -- (195.99,166.11) -- cycle ; \draw   (177.23,166.9) .. controls (177.22,166.76) and (177.22,166.62) .. (177.21,166.48) .. controls (177.01,156.14) and (185.21,147.58) .. (195.54,147.34) ;  
%Shape: Arc
\draw  [draw opacity=0] (105.03,147.35) .. controls (105.17,147.34) and (105.3,147.34) .. (105.44,147.34) .. controls (115.79,147.11) and (124.36,155.3) .. (124.62,165.64) -- (105.85,166.11) -- cycle ; \draw   (105.03,147.35) .. controls (105.17,147.34) and (105.3,147.34) .. (105.44,147.34) .. controls (115.79,147.11) and (124.36,155.3) .. (124.62,165.64) ;  
%Flowchart: Connector
\draw  [color={rgb, 255:red, 208; green, 2; blue, 27 }  ,draw opacity=1 ][fill={rgb, 255:red, 208; green, 2; blue, 27 }  ,fill opacity=1 ] (117.6,87.58) .. controls (117.6,86.07) and (118.83,84.84) .. (120.34,84.84) .. controls (121.86,84.84) and (123.09,86.07) .. (123.09,87.58) .. controls (123.09,89.09) and (121.86,90.32) .. (120.34,90.32) .. controls (118.83,90.32) and (117.6,89.09) .. (117.6,87.58) -- cycle ;
%Flowchart: Connector
\draw  [color={rgb, 255:red, 208; green, 2; blue, 27 }  ,draw opacity=1 ][fill={rgb, 255:red, 208; green, 2; blue, 27 }  ,fill opacity=1 ] (308.66,151.74) .. controls (308.66,150.23) and (309.89,149) .. (311.4,149) .. controls (312.92,149) and (314.15,150.23) .. (314.15,151.74) .. controls (314.15,153.26) and (312.92,154.48) .. (311.4,154.48) .. controls (309.89,154.48) and (308.66,153.26) .. (308.66,151.74) -- cycle ;
%Curve Lines 
\draw [color={rgb, 255:red, 74; green, 144; blue, 226 }  ,draw opacity=1 ][line width=0.75]    (120.34,87.58) .. controls (152.73,136.15) and (150.65,141.85) .. (150.92,166.11) ;
%Curve Lines
\draw [color={rgb, 255:red, 74; green, 144; blue, 226 }  ,draw opacity=1 ]   (120.34,87.58) .. controls (135.92,99.08) and (150.83,97.65) .. (150.92,75.97) ;
%Curve Lines 
\draw [color={rgb, 255:red, 74; green, 144; blue, 226 }  ,draw opacity=1 ]   (105.85,121.04) .. controls (131.81,121.89) and (130.86,107.16) .. (120.34,87.58) ;
%Curve Lines 
\draw [color={rgb, 255:red, 74; green, 144; blue, 226 }  ,draw opacity=1 ]   (120.34,87.58) .. controls (157.95,123.79) and (175.06,122.84) .. (195.99,121.04) ;
%Straight Lines 
\draw    (234.64,75.97) -- (234.64,166.11) ;
\draw [shift={(234.64,114.44)}, rotate = 90] [color={rgb, 255:red, 0; green, 0; blue, 0 }  ][line width=0.75]    (10.58,-1.97) .. controls (8.19,-0.84) and (6.01,-0.18) .. (4.03,0) .. controls (6.01,0.18) and (8.19,0.84) .. (10.58,1.97)(6.56,-1.97) .. controls (4.17,-0.84) and (1.99,-0.18) .. (0,0) .. controls (1.99,0.18) and (4.17,0.84) .. (6.56,1.97)   ;
%Straight Lines
\draw [color={rgb, 255:red, 0; green, 0; blue, 0 }  ,draw opacity=1 ][line width=0.75]    (234.64,75.97) -- (324.79,75.97) ;
\draw [shift={(283.32,75.97)}, rotate = 180] [color={rgb, 255:red, 0; green, 0; blue, 0 }  ,draw opacity=1 ][line width=0.75]    (6.56,-1.97) .. controls (4.17,-0.84) and (1.99,-0.18) .. (0,0) .. controls (1.99,0.18) and (4.17,0.84) .. (6.56,1.97)   ;
%Straight Lines 
\draw    (234.64,166.11) -- (324.79,166.11) ;
\draw [shift={(283.32,166.11)}, rotate = 180] [color={rgb, 255:red, 0; green, 0; blue, 0 }  ][line width=0.75]    (6.56,-1.97) .. controls (4.17,-0.84) and (1.99,-0.18) .. (0,0) .. controls (1.99,0.18) and (4.17,0.84) .. (6.56,1.97)   ;
%Straight Lines 
\draw    (324.79,75.97) -- (324.79,166.11) ;
\draw [shift={(324.79,114.44)}, rotate = 90] [color={rgb, 255:red, 0; green, 0; blue, 0 }  ][line width=0.75]    (10.58,-1.97) .. controls (8.19,-0.84) and (6.01,-0.18) .. (4.03,0) .. controls (6.01,0.18) and (8.19,0.84) .. (10.58,1.97)(6.56,-1.97) .. controls (4.17,-0.84) and (1.99,-0.18) .. (0,0) .. controls (1.99,0.18) and (4.17,0.84) .. (6.56,1.97)   ;
%Shape: Arc 
\draw  [draw opacity=0] (253.42,75.55) .. controls (253.42,75.69) and (253.42,75.83) .. (253.42,75.97) .. controls (253.42,86.31) and (245.06,94.71) .. (234.72,94.75) -- (234.64,75.97) -- cycle ; \draw  [color={rgb, 255:red, 0; green, 0; blue, 0 }  ,draw opacity=1 ] (253.42,75.55) .. controls (253.42,75.69) and (253.42,75.83) .. (253.42,75.97) .. controls (253.42,86.31) and (245.06,94.71) .. (234.72,94.75) ;  
%Shape: Arc 
\draw  [draw opacity=0] (324.67,94.75) .. controls (324.54,94.75) and (324.4,94.74) .. (324.26,94.74) .. controls (313.92,94.45) and (305.76,85.85) .. (306.01,75.51) -- (324.79,75.97) -- cycle ; \draw   (324.67,94.75) .. controls (324.54,94.75) and (324.4,94.74) .. (324.26,94.74) .. controls (313.92,94.45) and (305.76,85.85) .. (306.01,75.51) ;  
%Shape: Arc
\draw  [draw opacity=0] (233.83,147.35) .. controls (233.97,147.34) and (234.1,147.34) .. (234.24,147.34) .. controls (244.59,147.11) and (253.16,155.3) .. (253.42,165.64) -- (234.64,166.11) -- cycle ; \draw   (233.83,147.35) .. controls (233.97,147.34) and (234.1,147.34) .. (234.24,147.34) .. controls (244.59,147.11) and (253.16,155.3) .. (253.42,165.64) ;  
%Flowchart: Connector 
\draw  [color={rgb, 255:red, 208; green, 2; blue, 27 }  ,draw opacity=1 ][fill={rgb, 255:red, 208; green, 2; blue, 27 }  ,fill opacity=1 ] (246.4,87.58) .. controls (246.4,86.07) and (247.63,84.84) .. (249.14,84.84) .. controls (250.66,84.84) and (251.89,86.07) .. (251.89,87.58) .. controls (251.89,89.09) and (250.66,90.32) .. (249.14,90.32) .. controls (247.63,90.32) and (246.4,89.09) .. (246.4,87.58) -- cycle ;
%Curve Lines 
\draw [color={rgb, 255:red, 74; green, 144; blue, 226 }  ,draw opacity=1 ][line width=0.75]    (279.72,166.11) .. controls (280.1,142.89) and (290.08,140.51) .. (311.4,151.74) ;
%Curve Lines
\draw [color={rgb, 255:red, 74; green, 144; blue, 226 }  ,draw opacity=1 ]   (249.14,87.58) .. controls (274.87,107.24) and (279.62,93.94) .. (279.72,75.97) ;
%Curve Lines
\draw [color={rgb, 255:red, 74; green, 144; blue, 226 }  ,draw opacity=1 ]   (234.64,121.04) .. controls (263.47,120.55) and (272.02,121.5) .. (311.4,151.74) ;
%Curve Lines 
\draw [color={rgb, 255:red, 74; green, 144; blue, 226 }  ,draw opacity=1 ]   (249.14,87.58) .. controls (286.75,123.79) and (303.39,121.5) .. (324.79,121.04) ;
%Curve Lines 
\draw [color={rgb, 255:red, 74; green, 144; blue, 226 }  ,draw opacity=1 ][line width=0.75]    (249.14,87.58) .. controls (272.97,114.37) and (297.21,136.71) .. (311.4,151.74) ;
% Text Node
\draw (184.37,202.78) node [anchor=north west][inner sep=0.75pt]   [align=left] {\textsuperscript{}};
\end{tikzpicture}
    \centering
    \caption{Left: The graded surface $(S,\{p\},\eta)$ associated with $\k Q'/I'$ consists of a torus with one boundary and one marked point (in red) on that boundary, and the arc system $A'$ (in blue) consists of two arcs. Right: The graded surface $(S,M,\eta)$ associated with $\k Q/I$ consists of a torus with one boundary and two marked points (in red) on that boundary, and the arc system $A$ (in blue) consists of three arcs.}
    \label{figure:marked surfaces}
\end{figure}

We comment on the choice of grading $\eta$, c.f.~\cite[Section 6.6]{Alex22}. Pick two simple closed curves, $\alpha$ and $\beta$, on the torus $S$ which intersect in a single point (and thus give generators of $\pi_1(S)$).
Up to homotopy, $\eta$ is determined by its winding numbers $m\in\mathbb Z$ along $\alpha$ and $n\in\mathbb Z$ along $\beta$. Up to the action of the mapping class group $SL(2,\mathbb Z)$ of $S$, the gradings are thus classified by the non-negative integer $N=\mathrm{gcd}(m,n)$.
In particular, the constant line field $\eta$ is classified by $N=0$ and we call this the \textit{standard grading}.
In this case, the gradings on the three arcs in our full formal arc system $A$ can be chosen so that all arrows $a_1,a_2,b_1,b_2$ have degree zero, so the entire algebra is concentrated in degree zero.

Let $P$ be an arc on $S$ which connects the two points in $M$ and stays close to the boundary $\partial S$. There are two such arcs, up to isotopy, and for concreteness we take the one where the boundary is just to the left as we go from $p$ to $q$.
We fix an arbitrary grading on $P$ and consider it as an object of the Fukaya category $\cW(S,M,\eta)$. As such, it is an exceptional object.
An indecomposable object in $\cW(S,M,\eta)$ is supported on some immersed curve $c$ in $S$ which is either closed or has endpoints in $M$. Then $c$ is in $P^\perp$, i.e. $\mathrm{Hom}(P,c[n])=0$, if and only if $c$ avoids $q\in M$.
It follows that 
\begin{equation}
    P^\perp= \cW(S,\{p\},\eta)
\end{equation}
is the Fukaya category of a genus one surface with one boundary component and \textit{one} marked point on the boundary.

We want to find a graded gentle algebra corresponding to $P^\perp= \cW(S,\{p\},\eta)$.
For this choose the full formal arc system $A'$ on $(S,\{p\})$ with two arcs as indicated on the left in Figure~\ref{figure:marked surfaces}.
The graded gentle algebra corresponding $A'$ is described by the following  quiver $Q'$ with relations:
\[
Q'=\left\{
\begin{tikzcd}
\mathop{\bullet}\limits_{1} \arrow[r, "a", shift left=3.5] \arrow[r, "c", shift right=5] & \mathop{\bullet}\limits_{2} \arrow[l, "b"', shift left]
\end{tikzcd}\Bigg{\vert}\quad ab=bc=0
\right\}.
\]
In regards to the grading, it is important to note that the standard grading $N=0$ does not correspond to the case $|a|=|b|=|c|=0$, but rather, for example, to $|a|=|c|=0,|b|=1$. In general, we have $N=\mathrm{gcd}(1-|a|-|b|,1-|b|-|c|)$.

%Moved to intro
%Denote by $S$ the Serre functor on $D^b(Q)$. It is worth noting that \cite{Sung2022RemarksOT} demonstrate that $P^{\perp}$ does not admit any \emph{Serre-invariant stability condition}, i.e. $S\cdot\sigma=\sigma\cdot g$, for some $g\in\widetilde{\mathrm{GL}}^+(2,\mathbb R)$. 
\iffalse
Then we have another semiorthogonal decomposition $D^b(Q)=\langle S(P),P^{\perp}\rangle$. According to \cite[Theorem 3.1]{Sung2022RemarksOT}, there exists a natural isomorphism of functors
\[
S^{-1}[1]\vert_{P^{\perp}}\simeq\mathbb L_{S(P)}:P^{\perp}\to{^\perp} P,
\]
where $\mathbb L_{S(P)}$ denotes the left mutation with respect to $S(P)$.
\fi

%=========================================================
%=========================================================
\section{Proofs of the main results}
\label{sec:main results}

In this section, we demonstrate that the category $P^{\perp}=\cW(S,\{p\},\eta)$ equipped with the standard grading $\eta$ does not admit a stability condition. Furthermore, we confirm that the category $\cW(S,M,\eta)=\mathrm{Perf}(\k Q)$ with non-standard grading does not satisfy the Jordan--H\"older property for polarizable SODs.

\subsection{The non-existence of stability conditions}

\begin{definition}[{\cite{Collins2009GluingSC}}]
A stability condition $\sigma=(Z,\cP)$ on $\cD$ is called \emph{reasonable} if it satisfies 
\[
\inf\limits_{E \text{ is semistable with } E\ne0}\vert Z(E)\vert>0.
\]
\end{definition}

We note that any stability condition satisfying the support property is reasonable.

\begin{lemma}[{\cite[Theorem 3.6]{Collins2009GluingSC}}]
\label{gluing-stab}
Let $\langle\cD_1,\cD_2\rangle$ be a semiorthogonal decomposition of a triangulated category of $\cD$. Let $\lambda_1$ denote the left adjoint to the inclusion $\cD_1 \hookrightarrow \cD$, and let $\rho_2$ denote the right adjoint to the inclusion $\cD_2 \hookrightarrow \cD$. Suppose that $(\sigma_1=(Z_1,\cP_1),\sigma_2=(Z_2,\cP_2))$ is a pair of reasonable stability conditions on $\cD_1$ and $\cD_2$, respectively. Assume that $\sigma_1$ and $\sigma_2$ fulfill the following conditions:
\begin{enumerate}
\item $\Hom^{\le0}_{\cD}(\cP_1(0,1],\cP_2(0,1])=0$;
\item There exists a real number $a\in(0,1)$ such that 
\[
\Hom^{\le0}_{\cD}(\cP_1(a,a+1],\cP_2(a,a+1])=0.
\]
\end{enumerate}
Then, there exists a unique \emph{reasonable} stability condition $\sigma=(Z,\cA)$ on $\cD$ glued from $\sigma_1$ and $\sigma_2$, where 
\[
\cA=\left\{E\in\cD\mid\lambda_1(E)\in\cP_1((0,1]), \rho_2(E)\in\cP_2((0,1])\right\},
\]
and 
\[
Z(E)=Z_1(\lambda_1(E))+Z_2(\rho_2(E)).
\]
\end{lemma}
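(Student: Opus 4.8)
The plan is to verify Bridgeland's equivalent description of (pre-)stability conditions~\cite{MR2373143}: it suffices to exhibit the heart $\cA$ of a bounded $t$-structure on $\cD$, a stability function $Z\colon K_0(\cA)\to\mathbb C$ on it with the Harder--Narasimhan property, and then to check reasonableness and uniqueness. I take $\cA$ and $Z$ to be the two expressions in the statement and use throughout the semiorthogonal triangle $\rho_2E\to E\to\lambda_1E$ (with $\rho_2E\in\cD_2$, $\lambda_1E\in\cD_1$) together with $\Hom_\cD(\cD_2,\cD_1)=0$. \textbf{The heart.} Writing $\cA_i:=\cP_i((0,1])$, one checks that $\cA$ equals the extension closure $\langle\cA_1,\cA_2\rangle$ in $\cD$, using the triangle above and exactness of $\lambda_1,\rho_2$. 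By Bridgeland's criterion, $\cA$ is the heart of a bounded $t$-structure once $\Hom^{<0}_\cD(\cA,\cA)=0$ and every $E\in\cD$ admits a finite filtration with factors in $\cA[k]$ for decreasing $k$. The first reduces on generators to $\Hom^{<0}_{\cD_i}(\cA_i,\cA_i)=0$ (the $\cA_i$ are hearts), to $\Hom^{<0}_\cD(\cA_2,\cA_1)=0$ (from $\Hom_\cD(\cD_2,\cD_1)=0$), and to $\Hom^{<0}_\cD(\cA_1,\cA_2)=0$ --- which is precisely a consequence of hypothesis (1). The second is assembled from the bounded $\sigma_i$-filtrations of $\lambda_1E,\rho_2E$ through the semiorthogonal triangle, and boundedness is clear since $\cD=\langle\cD_1,\cD_2\rangle$. \textbf{The central charge.} Since $\lambda_1,\rho_2$ are exact and $K_0(\cA)=K_0(\cD)$, the formula $Z=Z_1\circ\lambda_1+Z_2\circ\rho_2$ is a group homomorphism on $K_0(\cA)$; for $0\ne E\in\cA$ the summands $Z_1(\lambda_1E),Z_2(\rho_2E)$ lie in $\mathbb H\cup\mathbb R_{\le0}$ and are not both $0$ (otherwise $\lambda_1E=\rho_2E=0$, since a stability function does not vanish on a nonzero heart object, and then $E=0$), so $Z(E)\in\mathbb H\cup\mathbb R_{<0}$ has phase in $(0,1]$: $Z$ is a stability function on $\cA$.

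\textbf{The Harder--Narasimhan property (the main obstacle).} This is where hypothesis (2) is needed. I would produce the full slicing by setting $\cP(\phi)$ equal to the extension closure of $\cP_1(\phi)\cup\cP_2(\phi)$ in $\cD$; then $\cP(\phi+1)=\cP(\phi)[1]$ and $\cP((0,1])=\cA$ are automatic, and axiom~(a) holds since $Z$ restricts to $Z_j$ on $\cD_j$ and nonzero vectors on a common ray add along that ray. For axiom~(c), after reduction to semistable generators one faces the same-$\cD_i$ case (axiom~(c) for $\sigma_i$), the $\cD_2\to\cD_1$ case ($\Hom_\cD(\cD_2,\cD_1)=0$), and the cross-terms $\Hom_\cD(\cP_1(\phi_1),\cP_2(\phi_2))$ with $\phi_1>\phi_2$, which after a shift always become $\Hom_\cD$ in non-positive degrees between $\cA_1$ and $\cA_2$, hence vanish by hypothesis (1). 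The delicate axiom is~(d): concatenating the $\sigma_1$- and $\sigma_2$-HN filtrations of $\lambda_1E$ and $\rho_2E$ through the semiorthogonal triangle writes any $E\in\cD$ as a finite iterated extension of $\sigma$-semistables, but possibly out of phase order; sorting it into decreasing order requires splitting each ``bad'' sub-extension --- a non-split extension of a $\sigma_1$-semistable by a $\sigma_2$-semistable of strictly smaller phase --- and each such extension class lies in $\Hom^{\le 0}_\cD(\cP_1(a,a+1],\cP_2(a,a+1])$ for a suitable $a$. Thus axiom~(d) comes down to
\[
\Hom^{\le 0}_{\cD}(\cP_1(a,a+1],\cP_2(a,a+1])=0\qquad\text{for every }a
\]
(periodic in $a$ of period $1$). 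Hypotheses (1) and (2) hand us this identity precisely at $a=0$ and at one chosen $a\in(0,1)$, and the crux --- which I expect to be the hardest step --- is to propagate it to all $a$ by a convexity argument: one shows that the set of bad $a$, after an offending morphism is replaced by one between extremal HN factors of its source and target, is controlled by its endpoints, so that avoiding the two given values of $a$ forces it to be empty. (Alternatively, one checks Bridgeland's chain conditions for $(Z,\cA)$, reducing them to the same conditions for $\sigma_1,\sigma_2$ via $\lambda_1,\rho_2$ and the $\Hom$-vanishing above.)

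\textbf{Reasonableness and uniqueness.} If $E$ is $\sigma$-semistable of phase $\phi$, then $\lambda_1E\in\cP_1(\phi)$ and $\rho_2E\in\cP_2(\phi)$ are each zero or $\sigma_i$-semistable, with $Z_1(\lambda_1E),Z_2(\rho_2E)$ on the common ray $\mathbb R_{>0}e^{\pi\mathbf i\phi}$, so $|Z(E)|=|Z_1(\lambda_1E)|+|Z_2(\rho_2E)|$; since $\sigma_1,\sigma_2$ are reasonable and at least one summand is nonzero, this is bounded below uniformly over nonzero $\sigma$-semistables, i.e. $\sigma$ is reasonable. Finally, $\cA$ and $Z$ are dictated by the gluing recipe, and a pre-stability condition is determined by its heart and central charge, so $\sigma$ is unique.
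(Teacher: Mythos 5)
This lemma is quoted in the paper from Collins--Polishchuk and not reproved there, so the benchmark is their argument. Your overall scaffolding --- gluing the hearts $\cA_1=\cP_1((0,1])$ and $\cA_2=\cP_2((0,1])$ into $\cA$ using hypothesis (1), checking that $Z=Z_1\circ\lambda_1+Z_2\circ\rho_2$ is a stability function on $\cA$, and deducing reasonableness and uniqueness at the end --- is sound and agrees with theirs. The gap is exactly at the step you flag as the crux, the Harder--Narasimhan property. Your sorting argument needs $\Hom_{\cD}(\cP_1(\phi_1),\cP_2(\psi))=0$ whenever $\psi<\phi_1+1$, which is precisely the statement ``$\Hom^{\le0}(\cP_1((a,a+1]),\cP_2((a,a+1]))=0$ for \emph{every} $a$''. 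That statement is strictly stronger than hypotheses (1) and (2), and it is \emph{false} in situations where the lemma holds, so no propagation or convexity argument can establish it. Concretely, take $\cD=D^b(\mathbb{P}^1)=\langle\cO,\cO(1)\rangle$ with $\cD_1=\langle\cO\rangle$, $\cD_2=\langle\cO(1)\rangle$, and choose $\cO\in\cP_1(\phi_1)$, $\cO(1)[-1]\in\cP_2(\phi_2)$ with $0<\phi_2<\phi_1\le1$. Hypothesis (1) holds, hypothesis (2) holds for any $a\in(0,\phi_2)$, and the glued heart is the finite-length Kronecker heart $\langle\cO,\cO(1)[-1]\rangle$, so the glued stability condition exists; yet for $a\in(\phi_2,\phi_1)$ one has $\cO\in\cP_1((a,a+1])$, $\cO(1)\in\cP_2((a,a+1])$ and $\Hom(\cO,\cO(1))\ne0$. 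The same example breaks your proposed slicing $\cP(\phi)=\langle\cP_1(\phi)\cup\cP_2(\phi)\rangle_{\mathrm{ext}}$: the object $\cO_p[-1]$ is a non-split extension of $\cO$ (phase $\phi_1$) by $\cO(1)[-1]$ (phase $\phi_2<\phi_1$), its connecting map $\cO\to\cO(1)$ is nonzero, so it admits no HN filtration with respect to that slicing; in the true glued stability condition it is a \emph{stable} object of intermediate phase lying in neither $\cD_1$ nor $\cD_2$.

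The actual role of hypothesis (2) in Collins--Polishchuk is different from the one you assign it: it guarantees that the tilted hearts $\cP_1((a,a+1])$ and $\cP_2((a,a+1])$ also glue to the heart of a bounded $t$-structure, and that this second glued heart is a tilt of $\cA$ compatible with $Z$ (every object of the intersection has phase in $(a,1]$, every object of $\cA$ meeting the shifted complement has phase in $(0,a]$). They then invoke a general criterion: a stability function on a heart has the HN property as soon as a single such compatible tilted heart exists at one interior value $a$, with reasonableness of $\sigma_1,\sigma_2$ used there to prevent accumulation of phases --- not merely to certify the output as reasonable. Your proof would need to be rebuilt around this tilting criterion; the ``concatenate the two HN filtrations and sort'' strategy cannot be repaired under the stated hypotheses.
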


\begin{definition}
The bounded derived category $P^{\perp}=D^b(Q')$ is said to have the \emph{standard grading} if the grading of the algebra $\k Q'/I'$ in \eqref{Q'} adheres to the following conditions:
\[
\vert\alpha\vert+\vert\beta\vert=1,\quad \vert\beta\vert+\vert\gamma\vert=1.
\]
\end{definition}

Recall that a stability condition $\sigma$ is said to have a \emph{gap} if the set $S^1 \setminus \Phi_{\sigma}$ includes an open interval, where $\Phi_{\sigma}$ denotes the collection of phases of $\sigma$-stable objects. Additionally, a stability condition $\sigma$ is termed \emph{finite-heart} if the heart of $\sigma$ has finitely many isomorphism classes of simple objects, and every object within the heart is of finite length.

\begin{lemma}[{\cite[Theorem A]{broomhead2024simpletiltslengthhearts}}]
\label{lem:gap=finite-heart}
Let $\cD$ be a triangulated category with $\rk K_0(\cD)<\infty$. Then a stability condition $\sigma\in\Stab(\cD)$ possesses a gap if and only if $\sigma\cdot z$ is finite-heart for some $z\in\mathbb{C}$.
\end{lemma}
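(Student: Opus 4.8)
The plan is to prove both directions of the equivalence directly, bypassing the cited reference, and to exploit that the $\mathbb C$-action on $\Stab(\cD)$ rotates every phase and the standard heart $\cP((0,1])$ by the same real amount (while rescaling the central charge), so that ``having a gap'' is $\mathbb C$-invariant whereas ``finite-heart'' depends only on where the window $(0,1]$ sits relative to the phases.

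\textbf{The easy direction (finite-heart $\Rightarrow$ gap).} Here I would assume $\sigma\cdot z$ has heart $\cH$ with finitely many simples $S_1,\dots,S_k$. Since $\cH$ has finite length, any $0\ne E\in\cH$ satisfies $[E]=\sum_i n_i[S_i]$ with $n_i\in\mathbb Z_{\ge0}$, so $Z(E)=\sum_i n_i Z(S_i)$ lies in the convex cone spanned by $Z(S_1),\dots,Z(S_k)$, whose phases all lie in $[\phi_{\min},\phi_{\max}]\subseteq(0,1]$ with $\phi_{\max}-\phi_{\min}<1$. Every stable object is a shift of a stable object of $\cH$, whose central charge lies in that cone, so $\Phi_{\sigma\cdot z}\subseteq\bigcup_{n\in\mathbb Z}[\phi_{\min}+n,\phi_{\max}+n]$; the image of this set in $S^1$ omits the nonempty open arc $(\phi_{\max},\phi_{\min}+1)$, giving a gap for $\sigma\cdot z$ and hence for $\sigma$.

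\textbf{The main direction (gap $\Rightarrow$ finite-heart).} I would pick an open interval $(\theta_0,\theta_1)\subset S^1$ disjoint from $\Phi_\sigma$, set $z=\tfrac12(\theta_0+\theta_1)$ and $\delta=\tfrac12(\theta_1-\theta_0)>0$, and replace $\sigma$ by $\sigma\cdot z$ (the stability condition we must exhibit), so that every $\sigma$-semistable object has phase in $\bigcup_{n\in\mathbb Z}[\delta+n,\,1-\delta+n]$. Then for $0\ne E\in\cH:=\cP((0,1])$ the central charges of all Harder--Narasimhan factors of $E$ lie in the closed cone $K=\{re^{\pi i\phi}:r\ge0,\ \phi\in[\delta,1-\delta]\}$, which is salient of angular width $\pi(1-2\delta)<\pi$; so there is $c=c(\delta)>0$ with $|w_1+\cdots+w_m|\ge c(|w_1|+\cdots+|w_m|)$ for all $w_j\in K$. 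Feeding the support-property bound $\Vert v(-)\Vert\le C|Z(-)|$ (on semistable objects) through this inequality yields $\Vert v(E)\Vert\le (C/c)|Z(E)|=:C'|Z(E)|$ for every $0\ne E\in\cH$; since $Z$ factors through $v$ and $\Lambda$ is a lattice, $\mu:=\min\{\Vert\lambda\Vert:0\ne\lambda\in\Lambda\}>0$, so $|Z(E)|\ge\mu/C'$ for all $0\ne E\in\cH$. I would then run the length estimate: for any chain $0=F_0\subsetneq\cdots\subsetneq F_N=E$ in $\cH$, additivity of $Z$ gives $Z(E)=\sum_i Z(F_i/F_{i-1})$ with each summand a nonzero element of $K$, so $|Z(E)|\ge c\sum_i|Z(F_i/F_{i-1})|\ge cN\mu/C'$, i.e. $N\le C'|Z(E)|/(c\mu)$, and the same bound bounds descending chains; hence $\cH$ is a length category. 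Finally $K_0(\cH)$ is free abelian on the isomorphism classes of simple objects, and since $K_0(\cH)\cong K_0(\cD)$ has finite rank there are only finitely many simples, i.e. $\cH$ is finite-heart.

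\textbf{Expected obstacle.} The delicate point is the normalization in the second direction: the gap is an \emph{open} arc, so one genuinely has to rotate (not simply take the standard heart) in order to push every stable phase strictly into the interior of $(0,1]$, away from both endpoints, before the cone argument can be applied. The conceptual crux is recognizing that the support property is precisely what upgrades ``no cancellation of central charges inside $\cH$'' to a uniform lower bound $|Z(-)|\ge\mu/C'$ on $\cH$, and thereby to finiteness of length --- a bare pre-stability condition may have a gap and yet an infinite-length heart. The remaining inputs (the salient-cone inequality, $K_0$ of a length category being free on simple classes, and $K_0(\cH)\cong K_0(\cD)$ for a bounded heart) are standard, and I would cite rather than reprove them.
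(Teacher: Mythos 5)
Your argument is correct, and it is worth noting that the paper itself supplies no proof of this lemma: it is quoted as Theorem~A of the cited reference, so what you have written is a self-contained proof of an imported result rather than an alternative to an internal one. Both directions are sound. The forward implication via the convex cone on $Z(S_1),\dots,Z(S_k)$ and the observation that a gap is invariant under the $\mathbb{C}$-action are exactly right. In the converse direction you correctly isolate the two essential mechanisms: the rotation placing the integer phases in the middle of the gap, so that central charges of HN factors of objects of the new heart lie in a salient closed cone $K$ of width $\pi(1-2\delta)$ and hence satisfy $\lvert\sum w_j\rvert\ge c\sum\lvert w_j\rvert$ (with $c$ the cosine of the half-angle, by taking real parts after rotating $K$ onto the positive real axis); and the support property plus discreteness of the lattice $\Lambda\subset\Lambda\otimes\mathbb{R}$ upgrading this to the uniform bound $\lvert Z(E)\rvert\ge\mu/C'$ on the heart, which bounds lengths of chains. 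Three small points deserve to be made explicit. First, passing from ``no \emph{stable} phases in the gap'' to ``no \emph{semistable} phases in the gap'' uses that every nonzero semistable object admits a stable Jordan--H\"older factor of the same phase; this holds because the same bound $\lvert Z\rvert\ge\mu/C$ on semistables forces each $\cP(\phi)$ to have finite length. Second, the inequality $\Vert v(E)\Vert\ge\mu$ requires $v(E)\ne 0$, which follows since $Z(E)$ is a nonzero sum of elements of the salient cone $K$ and $Z$ factors through $v$, and it requires $\Lambda$ to be torsion-free, which is built into ``finite rank lattice'' here. Third, the degenerate case of a gap of length $\ge 1$ should be handled by shrinking the gap (or by noting it forces $\cD=0$). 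With these remarks your proof is complete and is, in substance, the argument one would expect behind the cited Theorem~A.
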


\begin{theorem}
\label{thm:Existense of stab}
The bounded derived category $P^{\perp}=\cW(S,\{p\},\eta)$, where $\eta$ is the standard grading, does not admit a stability condition.
\end{theorem}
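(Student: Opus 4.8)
The plan is to prove the theorem by contradiction: assume $\sigma = (Z, \mathcal{P})$ is a stability condition on $P^\perp = \mathcal{W}(S, \{p\}, \eta)$ with the standard grading, and derive a contradiction. The strategy splits into two cases according to whether $\sigma$ has a gap.

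First, suppose $\sigma$ has a gap. By \Cref{lem:gap=finite-heart}, after rotating by some $z \in \mathbb{C}$ we may assume $\sigma$ is finite-heart, so its heart $\mathcal{H}$ is a finite-length abelian category with finitely many simple objects $S_1, \ldots, S_k$ generating $K_0(P^\perp) \cong \mathbb{Z}^2$; hence $k \geq 2$. Since $P^\perp$ has no exceptional objects (as explained in \Cref{sec:Bondal_quiver}: every indecomposable is supported on a closed curve or an arc on the once-marked torus, and one checks none of these is exceptional — the closed curves carry $\mathrm{Hom}(c, c[1]) \neq 0$ from the torus's genus, and arcs similarly fail), no $S_i$ can be exceptional. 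I would then analyze the finite-length hearts of $\mathcal{W}(S, \{p\}, \eta)$ directly: the key point is that $\mathcal{W}(S, \{p\}, \eta)$, being the Fukaya category of the once-punctured torus with standard grading, is equivalent to $\mathrm{Perf}$ of the graded gentle algebra $\k Q'/I'$ of \eqref{Q'} with $|\alpha| + |\beta| = |\beta| + |\gamma| = 1$. One can enumerate the possible finite-length hearts (these correspond, via the surface dictionary, to certain collections of arcs/curves giving a "full" configuration) and show every such heart must contain a simple object with a self-extension forcing $\mathrm{rk}\, K_0 \ge 3$, or otherwise contradicting $K_0 = \mathbb{Z}^2$ — this is the Jin–Schroll–Wang type analysis referenced after the Corollary, and it rules out finite-heart stability conditions.

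Second, and this is the genuinely new part, suppose $\sigma$ has no gap, i.e. the phases of $\sigma$-stable objects are dense in $S^1$. Here I would exploit the geometry of the Fukaya category: stable objects correspond to immersed graded curves, and their phases are governed by the grading (line field) $\eta$. For the \emph{standard} grading on the torus (winding number data with $N = \gcd = 0$), I expect one can show that the central charges of the indecomposable objects — whose classes in $K_0 = \mathbb{Z}^2$ are constrained — cannot have densely many phases while satisfying the HN property and $Z(E) \in \mathbb{R}_{>0}\exp(i\pi\phi)$ for semistable $E$. Concretely, since $K_0$ has rank $2$, $Z$ is determined by two complex numbers $Z(P_1), Z(P_2)$ (images of generators); the closed curves on the torus give a rank-$2$ sublattice's worth of classes, and I would track how their slopes must interleave with the arc classes. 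The standard-grading constraint should force a rigid interval structure on achievable phases, contradicting density — alternatively, one shows a would-be "no-gap" $\sigma$ restricts to give a stability condition on a subcategory where we can invoke the finite-heart case, or produces a contradiction with the support property via the reasonable-stability hypothesis and \Cref{gluing-stab} run in reverse.

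The main obstacle is the no-gap case: ruling out "exotic" stability conditions whose hearts are not finite length requires genuinely understanding the space of central charges compatible with the fixed line field, rather than a combinatorial classification. I expect the proof to hinge on a careful computation of the winding numbers of the relevant closed curves and arcs under the standard line field — showing that the resulting constraints on $\arg Z$ of indecomposables leave a forced gap in $S^1 \setminus \Phi_\sigma$ — which then contradicts the no-gap assumption and, combined with the first case handled by \Cref{lem:gap=finite-heart} and the absence of exceptional objects, completes the proof.
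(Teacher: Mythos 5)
Your case split (gap versus no gap) matches the underlying logic of the paper's argument, and your first case can be made rigorous, though more cleanly than you propose: rather than enumerating finite-length hearts by hand, the paper cites the fact that $\cW(S,\{p\},\eta)$ with the standard grading admits no silting object (\cite[Theorem D]{chang2023recollementspartiallywrappedfukaya}), which by \cite[Theorem 6.1]{Koenig2012SiltingOS} rules out any finite-heart stability condition, and hence by \Cref{lem:gap=finite-heart} any stability condition with a gap. Your sketch of this case is essentially a proposal to re-derive that known classification, and as written it is not carried out (the claim that every candidate heart ``must contain a simple object with a self-extension forcing $\rk K_0\ge 3$'' is asserted, not proved), but this is the repairable half.

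The genuine gap is the no-gap case, which you yourself identify as ``the genuinely new part'' and then only conjecture a resolution for (``I expect one can show\dots'', ``the standard-grading constraint should force\dots''). A direct analysis of central charges and winding numbers on $P^{\perp}$ itself is not what the paper does, and there is no evidence in your proposal that such an analysis closes. The paper instead escapes to the larger category: given a hypothetical (necessarily gap-free) stability condition $\sigma_1$ on $P^{\perp}$, it uses the SOD $D^b(Q)=\langle P^{\perp},P\rangle$, verifies the gluing hypotheses via \cite[Proposition 3.2]{halpernleistner2024stabilityconditionssemiorthogonaldecompositions}, and applies \Cref{gluing-stab} to produce a stability condition on $D^b(Q)$ without a gap, together with a whole open neighborhood of such; this contradicts \cite[Lemma 67]{Alex22}, which says that a \emph{generic} stability condition on the Fukaya category of the twice-marked genus-one surface has a gap. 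Your proposal mentions gluing only as a vague alternative (``\Cref{gluing-stab} run in reverse'') and never identifies the key external input --- the genericity of the gap property on $D^b(Q)$ --- without which the contradiction cannot be reached. As it stands, the no-gap case is a statement of intent rather than a proof, so the argument is incomplete.
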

\begin{proof}
Let $\sigma_1=(Z_1,\cP_1)$ be a stability condition in $\Stab(P^{\perp})$. According to \cite[Theorem D]{chang2023recollementspartiallywrappedfukaya}, $P^{\perp}$ does not admit silting objects. Thus, by \cite[Theorem 6.1]{Koenig2012SiltingOS}, $\sigma_1$ is not finite-heart. From \Cref{lem:gap=finite-heart}, it follows that $\sigma$ does not have a gap. Consider the semiorthogonal decomposition
\[
D^b(Q)=\langle P^{\perp},P\rangle.
\]
Since $D^b(Q)$ is smooth and proper, we can apply \cite[Proposition 3.2]{halpernleistner2024stabilityconditionssemiorthogonaldecompositions}, which implies that there exists an integer $m \in \mathbb{Z}$ for which $\Hom^{\le m}(\cA_1, P) = 0$. Consequently, by \Cref{gluing-stab}, there exists a stability condition $\sigma_2=(Z_2,\cP_2)\in\Stab(P)$ such that $\sigma_1$ and $\sigma_2$ can be glued to form a reasonable stability condition $\sigma'=(Z,\cP)$ on $D^b(Q)$, which also does not have a gap. We consider the spaces of stability conditions on $P^{\perp}$ and $P$ with respect to the pairs $(\Lambda_1,v_1)$ and $(\Lambda_2,v_2)$, respectively. Let $\Vert\cdot\Vert_i$ denote the norm on $\Lambda_i$ that satisfies \eqref{support_property} for some $C_i>0$. Define $\Lambda:=\Lambda_1\oplus\Lambda_2$ and $v:=v_1\oplus v_2\colon K_0(D^b(Q))\to\Lambda$. Then, for any $\sigma$-semistable object $0\ne E\in\cP(\phi)$,
we have 
\begin{equation*}
\begin{aligned}
 \Vert v(E)\Vert &=\Vert v_1(\lambda_1(E))\Vert_1+\Vert v_2(\rho_2(E))\Vert_2\\
 &\le \vert C_1Z_1(\lambda_1(E))\vert+\vert C_2Z_2(\rho_2(E))\vert\\
 &\le C\left(\vert Z_1(\lambda_1(E))\vert+\vert Z_2(\rho_2(E)\vert\right)\\
 &= C\vert Z(E)\vert,
\end{aligned}
\end{equation*}
where $C=\max\left\{C_1,C_2\right\}$. Therefore, $\sigma\in\Stab(D^b(Q))$. Furthermore, due to the openness of the gap condition, there exists an open neighborhood $U\subset\Stab(D^b(Q))$ containing $\sigma$ such that every stability condition in $U$ does not have a gap.  By \cite[Lemma 67]{Alex22}, we conclude that any generic stability condition $\sigma\in\Stab(D^b(Q))$ possesses a gap, which contradicts the discussion above. 
\end{proof}

\begin{remark}
    The existence of a gap, \cite[Lemma 67]{Alex22}, relies on \cite[Lemma 66]{Alex22} whose proof is incomplete. The author of~\cite{Alex22} has communicated to us the following replacement of it, which turns out to also simplify the proof:

    Assume, for contradiction, that all the stable intervals have ends at the same marked boundary, then:
\begin{enumerate}
    \item Take an interval $X$ with ends in different marked boundaries, and nontrivial winding $(p,q)\neq (0,0)$.
    \item Decompose $X$ into its chain of stable intervals (COSI) decomposition. Among the components there has to be some interval $J$ with nontrivial winding, thus by assumption it has ends on the same marked boundary.
    \item Pick a basis so that $J$ has winding $(1,0)$ (or equivalently a way of cutting the torus into a rectangle with a puncture in the middle and the interval wrapping horizontally).
    \item Continue as in the original proof, i.e. pick $J'$ with winding $(0,1)$ and ends on different marked boundaries, decompose it, and find the forbidden polygon.
\end{enumerate}
\end{remark}

\begin{remark}
The challenge of proving the existence of stability conditions on the bounded derived categories of coherent sheaves for any smooth projective variety remains an open and difficult problem in the study of Bridgeland stability conditions. Verifying the existence of these stability conditions requires a detailed understanding of all hearts of bounded t-structures within a triangulated category $\cD$. Consequently, evidence of non-existence has been found only in rather straightforward cases. For example, a (quasi)-phantom category $\cD$ is defined by the property $K_0(\cD)\otimes\mathbb Q=0$, which  precludes the possibility of stability conditions, since the central charge function cannot map any object to $0\in\mathbb C$. As far as we know, \Cref{thm:Polarizable J-H} provides the first example of a smooth and proper triangulated category with a positive rank Grothendieck group that does not support any stability conditions.
\end{remark}

\subsection{The failure of the polarizable Jordan-H\"{o}lder property}

\begin{theorem}
\label{thm:Polarizable J-H} 
Let $S$ be a genus one surface with one boundary component, $M$ a pair of marked points on the boundary of $S$.
Then the category $\cW(S,M,\eta)\cong\mathrm{Perf}(\k Q)$, equipped with a non-standard grading $\eta$, does not fulfill the polarizable Jordan--H\"{o}lder property.
\end{theorem}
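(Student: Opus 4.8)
The plan is to exhibit, on $\mathcal C = \mathcal W(S,M,\eta) \cong \mathrm{Perf}(\k Q)$ with non-standard grading, two maximal SODs that are both polarizable but of different length (or with non-isomorphic factors), thereby violating the Jordan--H\"older property. The first SOD is the full exceptional collection $\langle P_1,P_2,P_3\rangle$ coming from the three arcs of the full formal arc system $A$; since each $E_i$ is exceptional, the subcategory it generates is equivalent to $\mathrm{Perf}(\k)$, which manifestly admits a stability condition, so this SOD is polarizable and, being built from exceptional objects, has maximal factors. The second SOD is $\langle P, P^\perp\rangle$, where $P$ is the boundary-hugging arc connecting the two marked points, considered with a suitable grading; the factor $\langle P\rangle \cong \mathrm{Perf}(\k)$ is again polarizable, and $P^\perp \cong \mathcal W(S,\{p\},\eta)$ is a genus one surface with one boundary component and a single marked point, but now with a \emph{non-standard} grading (the relation $N = \gcd(1-|a|-|b|,\,1-|b|-|c|)$ shows $N \neq 0$ precisely when the grading on $Q$ is non-standard).

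The key step is then to show two things about this second SOD: that $P^\perp$ with non-standard grading \emph{does} admit a stability condition (so the SOD is genuinely polarizable), and that $P^\perp$ is maximal, i.e. admits no nontrivial SOD — in particular it contains no exceptional object. For the first point, I would invoke the structure of $\mathcal W(S,\{p\},\eta)$ for non-standard $\eta$: by Theorem~\ref{equi:PWF-GGA} this is $\mathrm{Perf}$ of the graded gentle algebra $\k Q'/I'$ with $|\alpha|+|\beta|\neq 1$ or $|\beta|+|\gamma|\neq 1$, and one can cite the existence of stability conditions on such categories — either via \cite{HKK17}, where stability conditions on Fukaya categories of surfaces are constructed from quadratic differentials (flat metrics), or more directly by exhibiting a silting object for the non-standard grading and applying \cite{Koenig2012SiltingOS} together with \cite{broomhead2024simpletiltslengthhearts}; the contrast with Theorem~\ref{thm:Existense of stab} is exactly that the silting object exists once $N \neq 0$, by \cite[Theorem D]{chang2023recollementspartiallywrappedfukaya}. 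For the second point, that $P^\perp$ has no exceptional object and hence no nontrivial SOD: an indecomposable object of $\mathcal W(S,\{p\},\eta)$ is supported on an immersed closed curve or an arc with endpoints at $p$; a direct computation of $\mathrm{Hom}(c,c[\bullet])$ using the surface model (self-intersections and the grading) shows that no such object can be exceptional — on a genus one surface every simple closed curve has nonzero self-intersection contributions in the relevant gradings, and arcs contribute endpoint morphisms. Since a triangulated category with no exceptional objects admits no SOD with a factor generated by an exceptional object, and more strongly one checks $P^\perp$ is indecomposable as a triangulated category (e.g. connectedness of the associated surface/gentle algebra), $P^\perp$ is a maximal factor.

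Finally, I would assemble the contradiction to Jordan--H\"older: the SOD $\langle P_1,P_2,P_3\rangle$ has three factors each equivalent to $\mathrm{Perf}(\k)$, while the SOD $\langle P, P^\perp\rangle$ has two factors, one equivalent to $\mathrm{Perf}(\k)$ and one equal to $P^\perp$ which is not equivalent to $\mathrm{Perf}(\k)$ (its Grothendieck group has rank $2$, or it contains no exceptional object). Both are maximal and polarizable, but they have different lengths and non-isomorphic factors, so the polarizable Jordan--H\"older property fails. The main obstacle I anticipate is the second half of the middle step — rigorously verifying that $P^\perp$ with non-standard grading contains \emph{no} exceptional object and admits \emph{no} nontrivial SOD; this requires a careful case analysis of indecomposable objects in the gentle algebra $\k Q'/I'$ (string and band modules, with their shifts) against the grading constraint, rather than any soft argument. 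The positive existence of a stability condition on $P^\perp$ should, by contrast, follow cleanly from the cited surface/silting machinery once the grading is non-standard.
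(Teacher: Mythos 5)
Your overall strategy coincides with the paper's: the same two SODs $\langle P_1,P_2,P_3\rangle$ and $\langle P^{\perp},P\rangle$, with polarizability of $P^{\perp}$ deduced from the existence of a silting object for the non-standard grading and hence a length heart carrying a stability condition (the paper cites \cite[Theorem 1.1]{JSZ23} together with \cite[Theorem 6.1]{Koenig2012SiltingOS} and \cite[Lemma 5.2]{Bridgeland2006SpacesOS}); that part of your plan is sound and matches the paper.

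The gap is in your maximality argument for $P^{\perp}$. Showing that $P^{\perp}$ contains no exceptional object does not show that it admits no nontrivial SOD: a semiorthogonal factor need not be generated by an exceptional object (the whole point of the example is that $P^{\perp}$ itself is such a factor), so a case analysis of indecomposables against exceptionality cannot close the argument. Your stronger claim --- that $P^{\perp}$ is ``indecomposable as a triangulated category'' because the associated surface/gentle algebra is connected --- is not a valid inference either: connected algebras routinely admit nontrivial SODs (the $A_2$ quiver already does). What is needed is a classification of \emph{all} SODs of $\cW(S,\{p\},\eta)$, and the paper obtains this by citing the bijection between SODs of a surface Fukaya category and proper sequences of good cuts of the marked surface \cite[Theorem 3.20]{kopiva2022semiorthogonal}; since $(S,\{p\})$ admits no good cut, $P^{\perp}$ has no nontrivial SOD. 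Without this result, or a substitute of comparable strength, your proof of maximality does not go through.
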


\begin{proof}
Consider the following semiorthogonal decompositions of $\cW(S,M,\eta)\cong\mathrm{Perf}(\k Q)$:
\[
\cW(S,M,\eta)=\langle P_1,P_2,P_{3}\rangle=\langle P^{\perp},P\rangle,
\]
where each $P_i$ denotes the projective module corresponding to a respective vertex. It is clear that the first semiorthogonal decomposition is maximal and polarizable. We claim that the second decomposition is also maximal and polarizable, which leads to the desired conclusion. According to {\cite[Theorem 1.1]{JSZ23}}, $P^{\perp}=\cW(S,\{p\},\eta)$ admits a silting object, which is equivalent to the existence of a bounded t-structure with \emph{length heart}, by \cite[Theorem 6.1]{Koenig2012SiltingOS}. 
According to \cite[Lemma 5.2]{Bridgeland2006SpacesOS}, there exists a nonempty subset $U\subset\Stab(P^{\perp})$ consisting of stability conditions with heart $\k Q'$. The fact that $\cW(S,\{p\},\eta)$ does not support nontrivial semiorthogonal decompositions is a consequence of the one-to-one correspondence between semiorthogonal decompositions of $P^{\perp}$ and proper sequences of good cuts of the associated marked surface $(S,\{p\},\eta)$, established in \cite[Theorem 3.20]{kopiva2022semiorthogonal}.
\end{proof}

\begin{remark} 
According to \Cref{gluing-stab}, we can build stability conditions on $\cW(S,M,\eta)$ through gluing, utilizing two distinct semiorthogonal decompositions of $\cW(S,M,\eta)$. It can be demonstrated that these resulting stability conditions reside within the same connected component of $\Stab(\cW(S,M,\eta))$ by describing the mutations of the S-graphs, as defined in \cite{HKK17}, from one semiorthogonal decomposition to the other. \Cref{figure:mutation} illustrates such a sequence of mutations of the S-graphs under a particular choice of non-standard grading $\eta$.
\begin{figure}
\tikzset{every picture/.style={line width=0.75pt}} %set default line width to 0.75pt        
\begin{tikzpicture}[x=0.75pt,y=0.75pt,yscale=-1,xscale=1]
%uncomment if require: \path (0,300); %set diagram left start at 0, and has height of 300

%Straight Lines 
\draw [line width=0.75]    (9.57,19.95) -- (108.36,18.86) ;
%Straight Lines 
\draw [line width=0.75]    (18.26,19.3) -- (30.42,5.58) ;
%Straight Lines  
\draw [line width=0.75]    (33.46,19.3) -- (45.62,5.58) ;
%Straight Lines 
\draw [line width=0.75]    (76.9,19.3) -- (89.06,5.58) ;
%Straight Lines 
\draw [line width=0.75]    (63,19.28) -- (75.16,5.55) ;
%Straight Lines  
\draw [line width=0.75]    (47.58,18.91) -- (59.74,5.18) ;
%Straight Lines
\draw [line width=0.75]    (90.48,18.91) -- (102.64,5.18) ;
%Curve Lines 
\draw [color={rgb, 255:red, 74; green, 144; blue, 226 }  ,draw opacity=1 ][line width=0.75]    (33.46,19.3) .. controls (20.1,86.92) and (84.15,83.26) .. (76.9,19.3) ;
%Curve Lines  
\draw [color={rgb, 255:red, 0; green, 0; blue, 0 }  ,draw opacity=1 ][line width=0.75]    (33.46,19.3) .. controls (-22.03,85.78) and (23.59,99.81) .. (76.9,19.3) ;
%Curve Lines
\draw [line width=0.75]    (33.46,19.3) .. controls (78.58,88.98) and (144.8,96.69) .. (76.9,19.3) ;
%Shape: Ellipse
\draw  [color={rgb, 255:red, 208; green, 2; blue, 27 }  ,draw opacity=1 ][fill={rgb, 255:red, 208; green, 2; blue, 27 }  ,fill opacity=1 ][line width=0.75]  (34.65,16.95) .. controls (35.63,17.82) and (35.9,19.57) .. (35.24,20.87) .. controls (34.59,22.17) and (33.26,22.53) .. (32.28,21.66) .. controls (31.3,20.79) and (31.03,19.04) .. (31.69,17.73) .. controls (32.34,16.43) and (33.67,16.08) .. (34.65,16.95) -- cycle ;
%Shape: Ellipse 
\draw  [color={rgb, 255:red, 208; green, 2; blue, 27 }  ,draw opacity=1 ][fill={rgb, 255:red, 208; green, 2; blue, 27 }  ,fill opacity=1 ][line width=0.75]  (78.09,16.95) .. controls (79.07,17.82) and (79.34,19.57) .. (78.68,20.87) .. controls (78.03,22.17) and (76.7,22.53) .. (75.72,21.66) .. controls (74.74,20.79) and (74.47,19.04) .. (75.13,17.73) .. controls (75.78,16.43) and (77.11,16.08) .. (78.09,16.95) -- cycle ;
%Straight Lines 
\draw    (86.8,31.59) -- (78.7,37.46) ;
\draw [shift={(77.08,38.63)}, rotate = 324.08] [color={rgb, 255:red, 0; green, 0; blue, 0 }  ][line width=0.75]    (4.37,-1.32) .. controls (2.78,-0.56) and (1.32,-0.12) .. (0,0) .. controls (1.32,0.12) and (2.78,0.56) .. (4.37,1.32)   ;
%Straight Lines 
\draw    (76.52,37.47) -- (67.92,35.64) ;
\draw [shift={(65.96,35.22)}, rotate = 12.03] [color={rgb, 255:red, 0; green, 0; blue, 0 }  ][line width=0.75]    (4.37,-1.32) .. controls (2.78,-0.56) and (1.32,-0.12) .. (0,0) .. controls (1.32,0.12) and (2.78,0.56) .. (4.37,1.32)   ;
%Straight Lines 
\draw    (41.52,30.63) -- (33.27,37.57) ;
\draw [shift={(31.74,38.86)}, rotate = 319.93] [color={rgb, 255:red, 0; green, 0; blue, 0 }  ][line width=0.75]    (4.37,-1.32) .. controls (2.78,-0.56) and (1.32,-0.12) .. (0,0) .. controls (1.32,0.12) and (2.78,0.56) .. (4.37,1.32)   ;
%Straight Lines 
\draw    (31.74,38.86) -- (21.84,38.24) ;
\draw [shift={(19.85,38.12)}, rotate = 3.56] [color={rgb, 255:red, 0; green, 0; blue, 0 }  ][line width=0.75]    (4.37,-1.32) .. controls (2.78,-0.56) and (1.32,-0.12) .. (0,0) .. controls (1.32,0.12) and (2.78,0.56) .. (4.37,1.32)   ;
%Straight Lines 
\draw    (107.07,30.66) -- (130.26,30.66) ;
\draw [shift={(132.26,30.66)}, rotate = 180] [color={rgb, 255:red, 0; green, 0; blue, 0 }  ][line width=0.75]    (8.74,-2.63) .. controls (5.56,-1.12) and (2.65,-0.24) .. (0,0) .. controls (2.65,0.24) and (5.56,1.12) .. (8.74,2.63)   ;
%Straight Lines 
\draw [line width=0.75]    (134.73,19.45) -- (233.51,18.36) ;
%Straight Lines 
\draw [line width=0.75]    (143.41,18.8) -- (155.57,5.08) ;
%Straight Lines 
\draw [line width=0.75]    (158.62,18.8) -- (170.78,5.08) ;
%Straight Lines
\draw [line width=0.75]    (202.06,18.8) -- (214.22,5.08) ;
%Straight Lines
\draw [line width=0.75]    (188.15,18.78) -- (200.31,5.05) ;
%Straight Lines 
\draw [line width=0.75]    (172.74,18.41) -- (184.89,4.68) ;
%Straight Lines 
\draw [line width=0.75]    (215.63,18.41) -- (227.79,4.68) ;
%Curve Lines  
\draw [color={rgb, 255:red, 0; green, 0; blue, 0 }  ,draw opacity=1 ][line width=0.75]    (158.62,18.8) .. controls (145.25,86.42) and (209.3,82.76) .. (202.06,18.8) ;
%Curve Lines 
\draw [color={rgb, 255:red, 74; green, 144; blue, 226 }  ,draw opacity=1 ][line width=0.75]    (158.62,18.8) .. controls (103.13,85.28) and (148.74,99.31) .. (202.06,18.8) ;
%Curve Lines 
\draw [line width=0.75]    (158.62,18.8) .. controls (191.27,71.16) and (124.27,71.66) .. (158.62,18.8) ;
%Shape: Ellipse 
\draw  [color={rgb, 255:red, 208; green, 2; blue, 27 }  ,draw opacity=1 ][fill={rgb, 255:red, 208; green, 2; blue, 27 }  ,fill opacity=1 ][line width=0.75]  (159.8,16.45) .. controls (160.79,17.32) and (161.05,19.07) .. (160.4,20.37) .. controls (159.74,21.67) and (158.41,22.03) .. (157.43,21.16) .. controls (156.45,20.29) and (156.18,18.54) .. (156.84,17.23) .. controls (157.49,15.93) and (158.82,15.58) .. (159.8,16.45) -- cycle ;
%Shape: Ellipse 
\draw  [color={rgb, 255:red, 208; green, 2; blue, 27 }  ,draw opacity=1 ][fill={rgb, 255:red, 208; green, 2; blue, 27 }  ,fill opacity=1 ][line width=0.75]  (203.24,16.45) .. controls (204.23,17.32) and (204.49,19.07) .. (203.84,20.37) .. controls (203.18,21.67) and (201.86,22.03) .. (200.87,21.16) .. controls (199.89,20.29) and (199.62,18.54) .. (200.28,17.23) .. controls (200.93,15.93) and (202.26,15.58) .. (203.24,16.45) -- cycle ;
%Straight Lines  
\draw    (156.74,37.26) -- (150.23,40.7) ;
\draw [shift={(148.46,41.64)}, rotate = 332.12] [color={rgb, 255:red, 0; green, 0; blue, 0 }  ][line width=0.75]    (4.37,-1.32) .. controls (2.78,-0.56) and (1.32,-0.12) .. (0,0) .. controls (1.32,0.12) and (2.78,0.56) .. (4.37,1.32)   ;
%Straight Lines
\draw    (201.67,36.97) -- (193.07,35.14) ;
\draw [shift={(191.11,34.72)}, rotate = 12.03] [color={rgb, 255:red, 0; green, 0; blue, 0 }  ][line width=0.75]    (4.37,-1.32) .. controls (2.78,-0.56) and (1.32,-0.12) .. (0,0) .. controls (1.32,0.12) and (2.78,0.56) .. (4.37,1.32)   ;
%Straight Lines
\draw    (164.91,31.65) -- (158.39,36.13) ;
\draw [shift={(156.74,37.26)}, rotate = 325.54] [color={rgb, 255:red, 0; green, 0; blue, 0 }  ][line width=0.75]    (4.37,-1.32) .. controls (2.78,-0.56) and (1.32,-0.12) .. (0,0) .. controls (1.32,0.12) and (2.78,0.56) .. (4.37,1.32)   ;
%Straight Lines 
\draw    (148.46,41.64) -- (142.62,43.14) ;
\draw [shift={(140.69,43.64)}, rotate = 345.58] [color={rgb, 255:red, 0; green, 0; blue, 0 }  ][line width=0.75]    (4.37,-1.32) .. controls (2.78,-0.56) and (1.32,-0.12) .. (0,0) .. controls (1.32,0.12) and (2.78,0.56) .. (4.37,1.32)   ;
%Straight Lines 
\draw    (231.41,30.66) -- (254.59,30.66) ;
\draw [shift={(256.59,30.66)}, rotate = 180] [color={rgb, 255:red, 0; green, 0; blue, 0 }  ][line width=0.75]    (8.74,-2.63) .. controls (5.56,-1.12) and (2.65,-0.24) .. (0,0) .. controls (2.65,0.24) and (5.56,1.12) .. (8.74,2.63)   ;
%Straight Lines 
\draw [line width=0.75]    (259.63,20.02) -- (358.41,18.93) ;
%Straight Lines 
\draw [line width=0.75]    (268.32,19.38) -- (280.48,5.65) ;
%Straight Lines 
\draw [line width=0.75]    (283.52,19.38) -- (295.68,5.65) ;
%Straight Lines
\draw [line width=0.75]    (326.96,19.38) -- (339.12,5.65) ;
%Straight Lines 
\draw [line width=0.75]    (313.06,19.35) -- (325.22,5.62) ;
%Straight Lines 
\draw [line width=0.75]    (297.64,18.98) -- (309.8,5.25) ;
%Straight Lines
\draw [line width=0.75]    (340.54,18.98) -- (352.7,5.25) ;
%Curve Lines 
\draw [color={rgb, 255:red, 0; green, 0; blue, 0 }, draw opacity=1 ][line width=0.75]    (283.52,19.38) .. controls (270.16,86.99) and (334.2,83.33) .. (326.96,19.38) ;
%Curve Lines 
\draw [color={rgb, 255:red, 0; green, 0; blue, 0 }  ,draw opacity=1 ][line width=0.75]    (283.52,19.38) .. controls (228.03,85.85) and (273.65,99.88) .. (326.96,19.38) ;
%Curve Lines
\draw [color={rgb, 255:red, 74; green, 144; blue, 226 }  ,draw opacity=1 ][line width=0.75]    (283.52,19.38) .. controls (295.21,50.59) and (302.54,33.26) .. (326.96,19.38) ;
%Shape: Ellipse 
\draw  [color={rgb, 255:red, 208; green, 2; blue, 27 }  ,draw opacity=1 ][fill={rgb, 255:red, 208; green, 2; blue, 27 }  ,fill opacity=1 ][line width=0.75]  (284.71,17.02) .. controls (285.69,17.89) and (285.96,19.64) .. (285.3,20.94) .. controls (284.65,22.25) and (283.32,22.6) .. (282.34,21.73) .. controls (281.35,20.86) and (281.09,19.11) .. (281.74,17.81) .. controls (282.4,16.51) and (283.73,16.15) .. (284.71,17.02) -- cycle ;
%Shape: Ellipse
\draw  [color={rgb, 255:red, 208; green, 2; blue, 27 }  ,draw opacity=1 ][fill={rgb, 255:red, 208; green, 2; blue, 27 }  ,fill opacity=1 ][line width=0.75]  (328.15,17.02) .. controls (329.13,17.89) and (329.4,19.64) .. (328.74,20.94) .. controls (328.09,22.25) and (326.76,22.6) .. (325.78,21.73) .. controls (324.79,20.86) and (324.53,19.11) .. (325.18,17.81) .. controls (325.84,16.51) and (327.17,16.15) .. (328.15,17.02) -- cycle ;
%Straight Lines 
\draw    (316.02,35.29) -- (312.01,31.29) ;
\draw [shift={(310.59,29.88)}, rotate = 44.94] [color={rgb, 255:red, 0; green, 0; blue, 0 }  ][line width=0.75]    (4.37,-1.32) .. controls (2.78,-0.56) and (1.32,-0.12) .. (0,0) .. controls (1.32,0.12) and (2.78,0.56) .. (4.37,1.32)   ;
%Straight Lines 
\draw    (326.57,37.54) -- (317.97,35.71) ;
\draw [shift={(316.02,35.29)}, rotate = 12.03] [color={rgb, 255:red, 0; green, 0; blue, 0 }  ][line width=0.75]    (4.37,-1.32) .. controls (2.78,-0.56) and (1.32,-0.12) .. (0,0) .. controls (1.32,0.12) and (2.78,0.56) .. (4.37,1.32)   ;
%Straight Lines 
\draw    (289.81,32.22) -- (283.3,36.7) ;
\draw [shift={(281.65,37.83)}, rotate = 325.54] [color={rgb, 255:red, 0; green, 0; blue, 0 }  ][line width=0.75]    (4.37,-1.32) .. controls (2.78,-0.56) and (1.32,-0.12) .. (0,0) .. controls (1.32,0.12) and (2.78,0.56) .. (4.37,1.32)   ;
%Straight Lines  
\draw    (281.65,37.83) -- (271.93,37.82) ;
\draw [shift={(269.93,37.82)}, rotate = 0.05] [color={rgb, 255:red, 0; green, 0; blue, 0 }  ][line width=0.75]    (4.37,-1.32) .. controls (2.78,-0.56) and (1.32,-0.12) .. (0,0) .. controls (1.32,0.12) and (2.78,0.56) .. (4.37,1.32)   ;
%Straight Lines
\draw    (33.56,110.16) -- (56.75,110.16) ;
\draw [shift={(58.75,110.16)}, rotate = 180] [color={rgb, 255:red, 0; green, 0; blue, 0 }  ][line width=0.75]    (8.74,-2.63) .. controls (5.56,-1.12) and (2.65,-0.24) .. (0,0) .. controls (2.65,0.24) and (5.56,1.12) .. (8.74,2.63)   ;
%Straight Lines 
\draw [line width=0.75]    (60.56,99.7) -- (159.34,98.61) ;
%Straight Lines 
\draw [line width=0.75]    (69.25,99.05) -- (81.41,85.33) ;
%Straight Lines 
\draw [line width=0.75]    (84.45,99.05) -- (96.61,85.33) ;
%Straight Lines 
\draw [line width=0.75]    (127.89,99.05) -- (140.05,85.33) ;
%Straight Lines 
\draw [line width=0.75]    (113.99,99.03) -- (126.15,85.3) ;
%Straight Lines
\draw [line width=0.75]    (98.57,98.66) -- (110.73,84.93) ;
%Straight Lines 
\draw [line width=0.75]    (141.47,98.66) -- (153.62,84.93) ;
%Curve Lines 
\draw [color={rgb, 255:red, 0; green, 0; blue, 0 }  ,draw opacity=1 ][line width=0.75]    (84.45,99.05) .. controls (71.09,166.67) and (135.13,163.01) .. (127.89,99.05) ;
%Curve Lines  
\draw [color={rgb, 255:red, 0; green, 0; blue, 0 }  ,draw opacity=1 ][line width=0.75]    (84.45,99.05) .. controls (31.07,163.76) and (120.27,164.16) .. (84.45,99.05) ;
%Curve Lines 
\draw [color={rgb, 255:red, 74; green, 144; blue, 226 }  ,draw opacity=1 ][line width=0.75]    (84.45,99.05) .. controls (106.76,124.03) and (111.05,109.46) .. (127.89,99.05) ;
%Shape: Ellipse 
\draw  [color={rgb, 255:red, 208; green, 2; blue, 27 }  ,draw opacity=1 ][fill={rgb, 255:red, 208; green, 2; blue, 27 }  ,fill opacity=1 ][line width=0.75]  (85.64,96.7) .. controls (86.62,97.57) and (86.88,99.32) .. (86.23,100.62) .. controls (85.58,101.92) and (84.25,102.28) .. (83.27,101.41) .. controls (82.28,100.54) and (82.02,98.79) .. (82.67,97.48) .. controls (83.33,96.18) and (84.65,95.83) .. (85.64,96.7) -- cycle ;
%Shape: Ellipse 
\draw  [color={rgb, 255:red, 208; green, 2; blue, 27 }  ,draw opacity=1 ][fill={rgb, 255:red, 208; green, 2; blue, 27 }  ,fill opacity=1 ][line width=0.75]  (129.08,96.7) .. controls (130.06,97.57) and (130.32,99.32) .. (129.67,100.62) .. controls (129.02,101.92) and (127.69,102.28) .. (126.71,101.41) .. controls (125.72,100.54) and (125.46,98.79) .. (126.11,97.48) .. controls (126.77,96.18) and (128.09,95.83) .. (129.08,96.7) -- cycle ;
%Straight Lines 
\draw    (96.05,109.56) -- (92.9,114.65) ;
\draw [shift={(91.85,116.36)}, rotate = 301.7] [color={rgb, 255:red, 0; green, 0; blue, 0 }  ][line width=0.75]    (4.37,-1.32) .. controls (2.78,-0.56) and (1.32,-0.12) .. (0,0) .. controls (1.32,0.12) and (2.78,0.56) .. (4.37,1.32)   ;
%Straight Lines 
\draw    (128.85,108.36) -- (120.11,106.57) ;
\draw [shift={(118.15,106.17)}, rotate = 11.52] [color={rgb, 255:red, 0; green, 0; blue, 0 }  ][line width=0.75]    (4.37,-1.32) .. controls (2.78,-0.56) and (1.32,-0.12) .. (0,0) .. controls (1.32,0.12) and (2.78,0.56) .. (4.37,1.32)   ;
%Straight Lines 
\draw    (91.85,116.36) -- (84.9,119.2) ;
\draw [shift={(83.05,119.96)}, rotate = 337.75] [color={rgb, 255:red, 0; green, 0; blue, 0 }  ][line width=0.75]    (4.37,-1.32) .. controls (2.78,-0.56) and (1.32,-0.12) .. (0,0) .. controls (1.32,0.12) and (2.78,0.56) .. (4.37,1.32)   ;
%Straight Lines
\draw    (83.05,119.96) -- (73.24,118.96) ;
\draw [shift={(71.25,118.76)}, rotate = 5.81] [color={rgb, 255:red, 0; green, 0; blue, 0 }  ][line width=0.75]    (4.37,-1.32) .. controls (2.78,-0.56) and (1.32,-0.12) .. (0,0) .. controls (1.32,0.12) and (2.78,0.56) .. (4.37,1.32)   ;
%Straight Lines 
\draw    (158.06,110.41) -- (181.25,110.41) ;
\draw [shift={(183.25,110.41)}, rotate = 180] [color={rgb, 255:red, 0; green, 0; blue, 0 }  ][line width=0.75]    (8.74,-2.63) .. controls (5.56,-1.12) and (2.65,-0.24) .. (0,0) .. controls (2.65,0.24) and (5.56,1.12) .. (8.74,2.63)   ;
%Straight Lines
\draw [line width=0.75]    (183.99,99.41) -- (282.77,98.33) ;
%Straight Lines 
\draw [line width=0.75]    (192.68,98.77) -- (204.83,85.04) ;
%Straight Lines 
\draw [line width=0.75]    (207.88,98.77) -- (220.04,85.04) ;
%Straight Lines
\draw [line width=0.75]    (251.32,98.77) -- (263.48,85.04) ;
%Straight Lines 
\draw [line width=0.75]    (237.42,98.74) -- (249.57,85.02) ;
%Straight Lines
\draw [line width=0.75]    (222,98.37) -- (234.16,84.65) ;
%Straight Lines
\draw [line width=0.75]    (264.9,98.37) -- (277.05,84.65) ;
%Curve Lines 
\draw [color={rgb, 255:red, 0; green, 0; blue, 0 }  ,draw opacity=1 ][line width=0.75]    (207.88,98.77) .. controls (169.26,150.13) and (252.5,161.85) .. (207.88,98.77) ;
%Curve Lines 
\draw [color={rgb, 255:red, 0; green, 0; blue, 0 }  ,draw opacity=1 ][line width=0.75]    (207.88,98.77) .. controls (132.4,139.56) and (219.26,176.13) .. (207.88,98.77) ;
%Curve Lines 
\draw [line width=0.75]    (207.88,98.77) .. controls (270.97,153.85) and (278.12,118.42) .. (251.32,98.77) ;
%Shape: Ellipse 
\draw  [color={rgb, 255:red, 208; green, 2; blue, 27 }  ,draw opacity=1 ][fill={rgb, 255:red, 208; green, 2; blue, 27 }  ,fill opacity=1 ][line width=0.75]  (209.07,96.41) .. controls (210.05,97.28) and (210.31,99.04) .. (209.66,100.34) .. controls (209,101.64) and (207.68,101.99) .. (206.69,101.12) .. controls (205.71,100.26) and (205.45,98.5) .. (206.1,97.2) .. controls (206.76,95.9) and (208.08,95.55) .. (209.07,96.41) -- cycle ;
%Shape: Ellipse 
\draw  [color={rgb, 255:red, 208; green, 2; blue, 27 }  ,draw opacity=1 ][fill={rgb, 255:red, 208; green, 2; blue, 27 }  ,fill opacity=1 ][line width=0.75]  (252.51,96.41) .. controls (253.49,97.28) and (253.75,99.04) .. (253.1,100.34) .. controls (252.44,101.64) and (251.12,101.99) .. (250.13,101.12) .. controls (249.15,100.26) and (248.89,98.5) .. (249.54,97.2) .. controls (250.2,95.9) and (251.52,95.55) .. (252.51,96.41) -- cycle ;
%Straight Lines 
\draw    (216.5,111.85) -- (210.59,114.69) ;
\draw [shift={(208.79,115.56)}, rotate = 334.29] [color={rgb, 255:red, 0; green, 0; blue, 0 }  ][line width=0.75]    (4.37,-1.32) .. controls (2.78,-0.56) and (1.32,-0.12) .. (0,0) .. controls (1.32,0.12) and (2.78,0.56) .. (4.37,1.32)   ;
%Straight Lines 
\draw    (225.93,113.27) -- (219.94,116.83) ;
\draw [shift={(218.22,117.85)}, rotate = 329.35] [color={rgb, 255:red, 0; green, 0; blue, 0 }  ][line width=0.75]    (4.37,-1.32) .. controls (2.78,-0.56) and (1.32,-0.12) .. (0,0) .. controls (1.32,0.12) and (2.78,0.56) .. (4.37,1.32)   ;
%Straight Lines 
\draw    (208.79,115.56) -- (200.2,116.72) ;
\draw [shift={(198.22,116.99)}, rotate = 352.3] [color={rgb, 255:red, 0; green, 0; blue, 0 }  ][line width=0.75]    (4.37,-1.32) .. controls (2.78,-0.56) and (1.32,-0.12) .. (0,0) .. controls (1.32,0.12) and (2.78,0.56) .. (4.37,1.32)   ;
%Straight Lines 
\draw    (198.22,116.99) -- (186.5,116.5) ;
\draw [shift={(184.5,116.42)}, rotate = 2.39] [color={rgb, 255:red, 0; green, 0; blue, 0 }  ][line width=0.75]    (4.37,-1.32) .. controls (2.78,-0.56) and (1.32,-0.12) .. (0,0) .. controls (1.32,0.12) and (2.78,0.56) .. (4.37,1.32)   ;

% Text Node
\draw (83.21,39.01) node [anchor=north west][inner sep=0.75pt]  [font=\tiny]  {$1$};
% Text Node
\draw (66.2,40.49) node [anchor=north west][inner sep=0.75pt]  [font=\tiny]  {$1$};
% Text Node
\draw (37.48,38.89) node [anchor=north west][inner sep=0.75pt]  [font=\tiny]  {$2$};
% Text Node
\draw (28.07,41.52) node [anchor=north] [inner sep=0.75pt]  [font=\tiny]  {$1$};
% Text Node
\draw (151.79,42.26) node [anchor=north west][inner sep=0.75pt]  [font=\tiny]  {$1$};
% Text Node
\draw (191.58,39.1) node [anchor=north west][inner sep=0.75pt]  [font=\tiny]  {$2$};
% Text Node
\draw (160.23,37.52) node [anchor=north west][inner sep=0.75pt]  [font=\tiny]  {$1$};
% Text Node
\draw (141.7,45.85) node [anchor=north west][inner sep=0.75pt]  [font=\tiny]  {$1$};
% Text Node
\draw (273.5,40.43) node [anchor=north west][inner sep=0.75pt]  [font=\tiny]  {$1$};
% Text Node
\draw (316.49,39.67) node [anchor=north west][inner sep=0.75pt]  [font=\tiny]  {$1$};
% Text Node
\draw (285.94,36.49) node [anchor=north west][inner sep=0.75pt]  [font=\tiny]  {$1$};
% Text Node
\draw (307.35,34.17) node [anchor=north west][inner sep=0.75pt]  [font=\tiny]  {$1$};
% Text Node
\draw (75.83,122.51) node [anchor=north west][inner sep=0.75pt]  [font=\tiny]  {$1$};
% Text Node
\draw (120.15,109.57) node [anchor=north west][inner sep=0.75pt]  [font=\tiny]  {$1$};
% Text Node
\draw (86.86,120.37) node [anchor=north west][inner sep=0.75pt]  [font=\tiny]  {$1$};
% Text Node
\draw (96.08,114.25) node [anchor=north west][inner sep=0.75pt]  [font=\tiny]  {$1$};
% Text Node
\draw (188.97,119.37) node [anchor=north west][inner sep=0.75pt]  [font=\tiny]  {$1$};
% Text Node
\draw (223.58,118.14) node [anchor=north west][inner sep=0.75pt]  [font=\tiny]  {$1$};
% Text Node
\draw (202.01,118.94) node [anchor=north west][inner sep=0.75pt]  [font=\tiny]  {$1$};
% Text Node
\draw (211.65,116.67) node [anchor=north west][inner sep=0.75pt]  [font=\tiny]  {$1$};
\end{tikzpicture}
    \centering
    \caption{A sequence of mutations of the S-graphs, with each mutation being relative to a specific blue line.}
    \label{figure:mutation}
\end{figure}
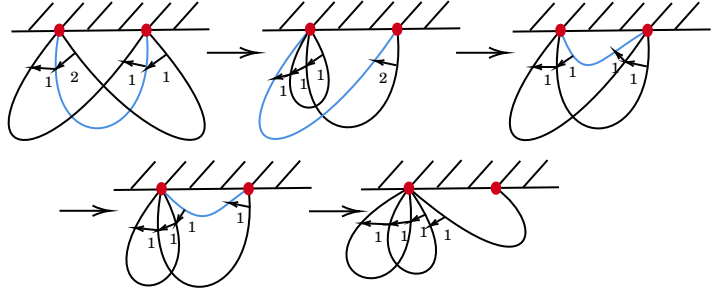
\end{remark}

\begin{remark}
In general, we may consider a graded gentle algebra $A$ associated with the corresponding graded marked surface $(S,M,\eta)$, where $S$ is a surface of genus $g\ge1$ with one boundary component $\partial S\cong S^1$ and two marked points $M=\{p,q\}$ on the boundary. Denote by $P$ the exceptional object in $\cW(S,M,\eta)\cong\mathrm{Perf}(A)$ determined by the arc as illustrated in \Cref{figure:general_P}. Similarly, there exists a graded gentle algebra $A'$ associate with the graded surface $(S,\{p\},\eta)$ such that 
 \[
 P^{\perp}\cong\cW(S,\{p\},\eta)\cong\mathrm{Perf}(A').
 \]
 Furthermore, we have the following semiorthogonal decompositions of $\cW(S,M,\eta)$:
\[
\cW(S,M,\eta)=\langle P_1,\dots,P_{n}\rangle=\langle P^{\perp},P\rangle,
\]
where each $P_i$ represents the projective module corresponding to a respective vertex. Following the argument in \Cref{thm:Polarizable J-H}, we can further conclude that the category $\cW(S,M,\eta)$ does not satisfy the polarizable Jordan--H\"{o}lder property when $g\ge2$.

 \begin{figure}
\centering
\tikzset{every picture/.style={line width=0.75pt}}
\begin{tikzpicture}[x=0.75pt,y=0.75pt,yscale=-1,xscale=1]
%Shape: Ellipse
\draw   (217.79,92.78) .. controls (219.75,83.96) and (232.66,76.8) .. (246.63,76.8) .. controls (260.61,76.8) and (270.35,83.96) .. (268.4,92.78) .. controls (266.45,101.61) and (253.53,108.77) .. (239.56,108.77) .. controls (225.59,108.77) and (215.84,101.61) .. (217.79,92.78) -- cycle ;
%Curve Lines 
\draw    (236.57,78.03) .. controls (251.02,72.55) and (420.38,40.51) .. (417.57,106.05) .. controls (414.76,171.58) and (228.91,111.71) .. (221.29,103.02) ;
%Shape: Arc 
\draw  [draw opacity=0] (301.97,84.52) .. controls (301.96,84.47) and (301.96,84.41) .. (301.96,84.36) .. controls (301.99,82.13) and (306.83,80.37) .. (312.78,80.43) .. controls (318.4,80.48) and (322.99,82.14) .. (323.47,84.2) -- (312.74,84.46) -- cycle ; \draw   (301.97,84.52) .. controls (301.96,84.47) and (301.96,84.41) .. (301.96,84.36) .. controls (301.99,82.13) and (306.83,80.37) .. (312.78,80.43) .. controls (318.4,80.48) and (322.99,82.14) .. (323.47,84.2) ;  
%Shape: Arc 
\draw  [draw opacity=0] (326.56,80.71) .. controls (324.59,85.11) and (319.13,88.24) .. (312.72,88.2) .. controls (306.31,88.15) and (300.9,84.93) .. (299.01,80.5) -- (312.81,77.08) -- cycle ; \draw   (326.56,80.71) .. controls (324.59,85.11) and (319.13,88.24) .. (312.72,88.2) .. controls (306.31,88.15) and (300.9,84.93) .. (299.01,80.5) ;  
%Shape: Arc
\draw  [draw opacity=0] (332.68,112.38) .. controls (332.68,112.32) and (332.68,112.27) .. (332.68,112.21) .. controls (332.7,109.99) and (337.54,108.23) .. (343.49,108.29) .. controls (349.11,108.34) and (353.71,110) .. (354.18,112.05) -- (343.45,112.32) -- cycle ; \draw   (332.68,112.38) .. controls (332.68,112.32) and (332.68,112.27) .. (332.68,112.21) .. controls (332.7,109.99) and (337.54,108.23) .. (343.49,108.29) .. controls (349.11,108.34) and (353.71,110) .. (354.18,112.05) ;  
%Shape: Arc
\draw  [draw opacity=0] (357.27,108.57) .. controls (355.31,112.96) and (349.84,116.1) .. (343.44,116.05) .. controls (337.03,116) and (331.61,112.78) .. (329.72,108.36) -- (343.52,104.94) -- cycle ; \draw   (357.27,108.57) .. controls (355.31,112.96) and (349.84,116.1) .. (343.44,116.05) .. controls (337.03,116) and (331.61,112.78) .. (329.72,108.36) ;  
%Flowchart: Connector 
\draw  [color={rgb, 255:red, 208; green, 2; blue, 27 }  ,draw opacity=1 ][fill={rgb, 255:red, 208; green, 2; blue, 27 }  ,fill opacity=1 ] (266.97,91.36) .. controls (266.97,90.57) and (267.61,89.93) .. (268.4,89.93) .. controls (269.19,89.93) and (269.83,90.57) .. (269.83,91.36) .. controls (269.83,92.14) and (269.19,92.78) .. (268.4,92.78) .. controls (267.61,92.78) and (266.97,92.14) .. (266.97,91.36) -- cycle ;
%Flowchart: Connector 
\draw  [color={rgb, 255:red, 208; green, 2; blue, 27 }  ,draw opacity=1 ][fill={rgb, 255:red, 208; green, 2; blue, 27 }  ,fill opacity=1 ] (216.37,91.36) .. controls (216.37,90.57) and (217.01,89.93) .. (217.79,89.93) .. controls (218.58,89.93) and (219.22,90.57) .. (219.22,91.36) .. controls (219.22,92.14) and (218.58,92.78) .. (217.79,92.78) .. controls (217.01,92.78) and (216.37,92.14) .. (216.37,91.36) -- cycle ;
%Flowchart: Connector
\draw  [color={rgb, 255:red, 0; green, 0; blue, 0 }  ,draw opacity=1 ][fill={rgb, 255:red, 0; green, 0; blue, 0 }  ,fill opacity=1 ] (384.35,99.97) .. controls (384.35,99.35) and (384.84,98.86) .. (385.46,98.86) .. controls (386.07,98.86) and (386.57,99.35) .. (386.57,99.97) .. controls (386.57,100.58) and (386.07,101.08) .. (385.46,101.08) .. controls (384.84,101.08) and (384.35,100.58) .. (384.35,99.97) -- cycle ;
%Flowchart: Connector 
\draw  [color={rgb, 255:red, 0; green, 0; blue, 0 }  ,draw opacity=1 ][fill={rgb, 255:red, 0; green, 0; blue, 0 }  ,fill opacity=1 ] (376.35,99.97) .. controls (376.35,99.35) and (376.84,98.86) .. (377.46,98.86) .. controls (378.07,98.86) and (378.57,99.35) .. (378.57,99.97) .. controls (378.57,100.58) and (378.07,101.08) .. (377.46,101.08) .. controls (376.84,101.08) and (376.35,100.58) .. (376.35,99.97) -- cycle ;
%Flowchart: Connector
\draw  [color={rgb, 255:red, 0; green, 0; blue, 0 }  ,draw opacity=1 ][fill={rgb, 255:red, 0; green, 0; blue, 0 }  ,fill opacity=1 ] (368.57,100.19) .. controls (368.57,99.58) and (369.07,99.08) .. (369.68,99.08) .. controls (370.3,99.08) and (370.79,99.58) .. (370.79,100.19) .. controls (370.79,100.8) and (370.3,101.3) .. (369.68,101.3) .. controls (369.07,101.3) and (368.57,100.8) .. (368.57,100.19) -- cycle ;
%Curve Lines 
\draw [color={rgb, 255:red, 74; green, 144; blue, 226 }  ,draw opacity=1 ]   (268.4,91.36) .. controls (287.95,76.31) and (281.09,68.6) .. (256.52,73.46) ;
%Curve Lines 
\draw [color={rgb, 255:red, 74; green, 144; blue, 226 }  ,draw opacity=1 ] [dash pattern={on 3pt off 3pt on 3pt off 3pt}]  (256.52,73.46) .. controls (239.09,79.17) and (232.23,82.03) .. (217.79,91.36) ;
\end{tikzpicture}
    \caption{The graded surface $(S,M,\eta)$ consists of a surface of genus $g\ge1$ with one boundary and two marked points in red. The object $P\in\cW(S,M,\eta)$ is determined by the arc in blue.}
    \label{figure:general_P}
\end{figure}
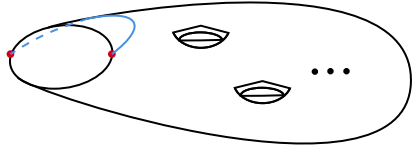
\end{remark}

%=========================================================
\bibliographystyle{alpha}
\bibliography{sample}

\Addresses

\end{document}